\numberwithin{equation}{section}
\newtheorem{Theorem}{Theorem}[section]
\newtheorem*{Theorem*}{Theorem}
\theoremstyle{definition}
\newtheorem{Remark}[Theorem]{Remark} }
\begin{document}
\allowdisplaybreaks

\newcommand{\arXivNumber}{2202.01278}

\renewcommand{\PaperNumber}{047}

\FirstPageHeading

\ShortArticleName{Determinantal Formulas for Exceptional Orthogonal Polynomials}

\ArticleName{Determinantal Formulas\\ for Exceptional Orthogonal Polynomials}

\Author{Brian SIMANEK}

\AuthorNameForHeading{B.~Simanek}

\Address{Baylor University Math Department, Waco, Texas, 76706, USA}
\Email{\href{mailto:Brian_Simanek@baylor.edu}{Brian\_Simanek@baylor.edu}}
\URLaddress{\url{https://www.baylor.edu/math/index.php?id=925780}}

\ArticleDates{Received February 11, 2022, in final form June 17, 2022; Published online June 25, 2022}

\Abstract{We present determinantal formulas for families of exceptional $X_m$-Laguerre and exceptional $X_m$-Jacobi polynomials and also for exceptional $X_2$-Hermite polynomials. The formulas resemble Vandermonde determinants and use the zeros of the classical orthogonal polynomials.}

\Keywords{exceptional orthogonal polynomials; determinantal formulas}

\Classification{42C05; 33C47}

\section{Introduction}

The classical families of orthogonal polynomials known as Laguerre, Jacobi, and Hermite polynomials have been studied extensively over the past 200 years because of their applicability and convenient formulas. Many of their applications are derived from the fact that they are polynomial solutions of explicit second order linear differential equations with polynomial coefficients. Exceptional orthogonal polynomials arise in a similar context: as polynomial solutions of explicit second order linear differential equations with rational coefficients. However, for the exceptional orthogonal polynomial sequences, there are a finite number of natural numbers for which there is no polynomial of that degree.

Exceptional orthogonal polynomials (XOPs) were first introduced in \cite{GKM09}. In \cite{GGM}, it was shown that every sequence of XOPs is related to a classical orthogonal polynomial sequence by a finite number of Darboux transformations. This allows us to label every sequence of XOPs as either Laguerre, Jacobi, or Hermite. It also naturally leads us to wonder what properties of the classical polynomials remain valid for the XOPs. For example, the classical orthogonal polynomials are known to satisfy three-term recurrence relations and one could ask if there is an analogous result for XOPs. Indeed, Dur\'an showed in \cite{D15,D21} that exceptional orthogonal polynomial sequences also satisfy recurrence relations, though of a different form (see also \cite{GKKM16}).

The zeros of exceptional orthogonal polynomials present especially interesting phenomena that have inspired considerable study. These zeros divide into two collections: the regular zeros and the exceptional zeros. The regular zeros exhibit behavior that mimics the behavior of the zeros of the classical orthogonal polynomials, while the exceptional zeros do not. Due to the presence of these exceptional zeros, the zeros of the classical orthogonal polynomials are easier to understand and study than the zeros of the XOPs. Detailed information about the zero behavior of exceptional orthogonal polynomial sequences can be found in \cite{Bonn,BK18,GMM,KM,SimExcep}.

Our analysis will focus on determinantal formulas for certain families of XOPs. There are ways to express the classical orthogonal polynomials using determinants of matrices whose entries are moments of the measure of orthogonality. A comparable result for the most elementary families of XOPs follows from \cite{KLOa,KLO,LO}. Dur\'an found alternative determinantal formulas for XOPs in \cite{D14,D14a,D17} by considering limits of discrete XOPs (see also \cite{D21a}). Our main results will be formulas for exceptional Laguerre, exceptional Jacobi, and exceptional Hermite polynomials as determinants of matrices whose entries are powers of zeros of the corresponding classical orthogonal polynomials (except in the last row, where the entries are explicit polynomials). Consequently, the Vandermonde determinant of the zeros of the classical orthogonal polynomials will appear as a factor in the resulting formulas.

In Section \ref{formulas}, we will review the basic facts and formulas about classical and exceptional orthogonal polynomials that will be relevant for our later calculations. In Section~\ref{newform}, we will derive our new determinantal formulas. In every case, we will first need to prove a first order differential relation for the polynomials (some of which were already known). In the Hermite case, our determinantal formula will only apply to one specific exceptional orthogonal polynomial sequence. For the Laguerre and Jacobi cases, our results will apply to XOP sequences that are obtained from the classical sequences by a single Darboux transformation.

\section{Background and formulas}\label{formulas}

\subsection{Classical orthogonal polynomials}

The starting place for our calculations will be a closer examination of the relationship between the exceptional orthogonal polynomials and the classical orthogonal polynomials. The next several sections will briefly catalog some of the relevant formulas that will be needed in our calculations. All of the formulas below involving the classical families of orthogonal polynomials can be found in the Digital Library of Mathematical Functions \cite{dlmf}.

\subsubsection{Classical Laguerre polynomials}

For any $\alpha>-1$, the degree $n$ Laguerre polynomial $L^{(\alpha)}_n(x)$ is given by
\begin{equation}\label{lagform}
L_n^{(\alpha)}(x)=\sum_{m=0}^n\frac{(-1)^m}{m!}\binom{n+\alpha}{n-m}x^m.
\end{equation}
These polynomials are orthogonal in the space $L^2\big([0,\infty),x^{\alpha}{\rm e}^{-x}{\rm d}x\big)$. Notice that the leading coefficient of $L_n^{(\alpha)}(x)$ is $(-1)^n/n!\neq0$, so even if $\alpha\leq -1$, the formula~\eqref{lagform} still defines a~poly\-nomial of degree $n$.

There are many interesting and applicable identities involving Laguerre polynomials. We will only need a small collection of these identities in our calculations. The formulas that will be most relevant for us are the following:
\begin{gather}
\label{lagder}
\frac{\rm d}{{\rm d}x}L_n^{(\alpha)}(x)=-L_{n-1}^{(\alpha+1)}(x),
\\[.5ex]
\label{lagform1}
L_n^{(\alpha)}(x)=L_{n}^{(\alpha+1)}(x)-L_{n-1}^{(\alpha+1)}(x),
\\
\label{lagform2}
xL_{n}^{(\alpha+1)}(x)=(n+\alpha+1)L_n^{(\alpha)}(x)-(n+1)L_{n+1}^{(\alpha)}(x).
\end{gather}
Another relevant fact is that the polynomial $L^{(\alpha)}_n$ is the unique polynomial of degree $n$ that solves the ODE
\begin{equation*}
xy''+(\alpha+1-x)y'+ny=0
\end{equation*}
with normalization
\begin{equation*}
L_n^{(\alpha)}(0)=\binom{\alpha+n}{n}.
\end{equation*}

We will also need the following result about the zeros of classical Laguerre polynomials.

\begin{Theorem}\label{clagzero}
Suppose $L_n^{(\alpha)}$ is defined by \eqref{lagform}.
\begin{itemize}\itemsep=0pt
\item[$(i)$] If $\alpha>-1$, then the zeros of $L_n^{(\alpha)}$ are simple and lie in the interval $(0,\infty)$.
\item[$(ii)$] If $m\in\mathbb{N}$ and $\alpha>m-1$, then the zeros of $L_m^{(-\alpha-1)}$ are simple.
\item[$(iii)$] If $m\in\mathbb{N}$ and $\alpha>m-1$, then $L_m^{(-\alpha-1)}$ does not have any zeros in $(0,\infty)$.
\end{itemize}
\end{Theorem}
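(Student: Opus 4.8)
The plan is to handle $(i)$ by the standard orthogonality argument and $(ii)$, $(iii)$ by direct inspection of the explicit formula \eqref{lagform} together with the Laguerre differential equation, the point being that for $(ii)$ and $(iii)$ the natural weight $x^{-\alpha-1}\mathrm{e}^{-x}$ is not integrable near the origin, so orthogonality is unavailable. For $(i)$, when $\alpha>-1$ the measure $x^{\alpha}\mathrm{e}^{-x}\,\mathrm{d}x$ on $[0,\infty)$ is positive and finite and $L_n^{(\alpha)}$ is, up to a nonzero constant, its $n$th orthogonal polynomial. I would let $t_1<\dots<t_k$ be the points of $(0,\infty)$ at which $L_n^{(\alpha)}$ changes sign, set $q(x)=\prod_{j=1}^{k}(x-t_j)$, and observe that $L_n^{(\alpha)}q$ has constant sign on $(0,\infty)$, so $\int_0^\infty L_n^{(\alpha)}(x)q(x)x^{\alpha}\mathrm{e}^{-x}\,\mathrm{d}x\ne 0$. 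If $k<n$ this contradicts the orthogonality of $L_n^{(\alpha)}$ to polynomials of lower degree, hence $k\ge n$; since $\deg L_n^{(\alpha)}=n$, we get $k=n$, which forces all $n$ zeros to be real, simple, and located in $(0,\infty)$.

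For $(iii)$, I would read off the sign of every coefficient of $L_m^{(-\alpha-1)}$ from \eqref{lagform}. With $\alpha>m-1$, the generalized binomial coefficient $\binom{m-\alpha-1}{m-k}$ is a product of the $m-k$ linear factors $m-\alpha-1,\,m-\alpha-2,\,\dots,\,k-\alpha$, each of which is \emph{strictly negative} because $\alpha>m-1$; hence $\binom{m-\alpha-1}{m-k}$ is nonzero with sign $(-1)^{m-k}$. Multiplying by $(-1)^k/k!$ shows that every coefficient of $L_m^{(-\alpha-1)}$ is nonzero with the common sign $(-1)^m$, so $L_m^{(-\alpha-1)}(x)\ne 0$ for all $x\ge 0$. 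This proves $(iii)$ and also records that $L_m^{(-\alpha-1)}(0)\ne 0$.

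For $(ii)$, I would use that $L_m^{(-\alpha-1)}$ satisfies the Laguerre differential equation $xy''+(-\alpha-x)y'+my=0$, valid for this parameter value either by direct substitution into \eqref{lagform} or by analytic continuation in the parameter from the range where $-\alpha-1>-1$. At any point $x_0\ne 0$ this equation is nonsingular (the coefficient of $y''$ is $x_0\ne0$), so if $L_m^{(-\alpha-1)}$ had a multiple zero at $x_0$, then $L_m^{(-\alpha-1)}(x_0)=\big(L_m^{(-\alpha-1)}\big)'(x_0)=0$ and uniqueness of solutions would force $L_m^{(-\alpha-1)}\equiv0$, a contradiction. Combined with $L_m^{(-\alpha-1)}(0)\ne0$ from the previous step, this shows every zero of $L_m^{(-\alpha-1)}$ is simple.

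The routine part is $(i)$. The only real care is needed in $(ii)$ and $(iii)$, where one must remember that $\binom{m-\alpha-1}{m-k}$ is to be interpreted through the polynomial \eqref{lagform} and observe that the hypothesis $\alpha>m-1$ is precisely what makes all of the relevant linear factors strictly negative, hence nonzero. I do not anticipate a serious obstacle beyond this bookkeeping; the main conceptual point is simply that for $(ii)$ and $(iii)$ one replaces orthogonality by the explicit coefficient formula and the differential equation.
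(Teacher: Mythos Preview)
Your argument is correct. Part $(i)$ is the textbook sign--change argument; your direct proof of $(iii)$ via the coefficient signs is clean and the bookkeeping is right (the top coefficient $k=m$ gives $\binom{m-\alpha-1}{0}=1$, so its sign is $(-1)^m$ as well), and your proof of $(ii)$ by uniqueness for the Laguerre ODE at regular points, combined with $L_m^{(-\alpha-1)}(0)\ne 0$ from $(iii)$, is valid.

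The paper does not prove this theorem from scratch; it cites Szeg\H{o}'s book. There, $(ii)$ is obtained from the explicit discriminant formula for Laguerre polynomials (Szeg\H{o}'s equation (6.71.6)), which for $L_m^{(-\alpha-1)}$ involves the factors $\nu-\alpha-1$ for $\nu=1,\dots,m$, all nonzero when $\alpha>m-1$, whence the discriminant is nonzero and the zeros are simple; $(iii)$ is read off from Szeg\H{o}'s Theorem~6.73, a Hilbert--Klein--type count of zeros of $L_n^{(\alpha)}$ in $(0,\infty)$ for arbitrary real $\alpha$. Your route is genuinely different: instead of the discriminant and zero--counting machinery, you use only the explicit series \eqref{lagform} and the second--order ODE. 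The advantage of your approach is that it is entirely self--contained and elementary, and it actually yields the slightly stronger statement $(-1)^m L_m^{(-\alpha-1)}(x)>0$ for all $x\ge 0$. The advantage of the discriminant/Hilbert--Klein route is that it sits inside a general framework that also locates and counts zeros for all real parameter values, not just the range $\alpha>m-1$.
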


Part $(i)$ of Theorem~\ref{clagzero} is part of a well-known classical result. Part $(ii)$ follows from \cite[equation~(6.71.6)]{SzegoBook} and part $(iii)$ follows from \cite[Theorem~6.73]{SzegoBook}. Note that \cite[Section~6.73]{SzegoBook} states that \cite[equation~(6.71.6)]{SzegoBook} is valid for all $\alpha\in\mathbb{R}$.

\subsubsection{Classical Jacobi polynomials}

If $\alpha,\beta>-1$, the degree $n$ Jacobi polynomial $P_n^{(\alpha,\beta)}(x)$ is given by
\begin{equation}\label{jacform}
P_{n}^{(\alpha,\beta)}(x)=\frac{\Gamma(\alpha+n+1)}{n!\Gamma(\alpha+\beta+n+1)} \sum_{m=0}^n\binom{n}{m}\frac{\Gamma(\alpha+\beta+n+m+1)}{\Gamma(\alpha+m+1)}\bigg(\frac{x-1}{2}\bigg)^m.
\end{equation}
These polynomials are orthogonal in the space $L^2\big([-1,1],(1-x)^{\alpha}(1+x)^{\beta}{\rm d}x\big)$. Even if $\alpha,\beta\leq -1$, the formula \eqref{jacform} still defines a polynomial, though it may have degree smaller than $n$ (see \cite[equation~(89)]{GMM}). It is stated in \cite[equation~(6.72.3)]{SzegoBook} that as long as $\alpha+\beta+n\not\in\{-1,-2,\dots,-n\}$, then $P_n^{(\alpha,\beta)}$ has degree $n$.

We will not need much in the way of additional formulas for classical Jacobi polynomials, but we will need the following result about the zeros of classical Jacobi polynomials.

\begin{Theorem}\label{cjaczero}
Suppose $P_n^{(\alpha,\beta)}$ is defined by \eqref{jacform}.
\begin{itemize}\itemsep=0pt
\item[$(i)$] If $\alpha,\beta>-1$, then the zeros of $P_n^{(\alpha,\beta)}$ are simple and lie in the interval $(-1,1)$.
\item[$(ii)$] If $\alpha,\beta\not\in\{-1,-2,\dots,-n\}$ and $\alpha+\beta\not\in\{-1-n,-2-n,\dots,-2n\}$, then the zeros of $P_n^{(\alpha,\beta)}$ are simple.
\item[$(iii)$] If $\beta,\alpha+1-m\in(-1,0)$ or $\beta,\alpha+1-m\in(0,\infty)$, then $P_m^{(-\alpha-1,\beta-1)}$ has no zeros in $(-1,1)$.
\end{itemize}
\end{Theorem}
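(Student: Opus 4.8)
The three parts call for rather different tools, so the plan is to treat them in turn, with most of the effort going into $(iii)$. Part $(i)$ is the classical statement for orthogonal polynomials with a positive weight: since $(1-x)^\alpha(1+x)^\beta>0$ on $(-1,1)$ and $\deg P_n^{(\alpha,\beta)}=n$ when $\alpha,\beta>-1$, the usual sign-change argument applies — if $P_n^{(\alpha,\beta)}$ changed sign at only $k<n$ points $x_1,\dots,x_k$ of $(-1,1)$, then $\int_{-1}^1 P_n^{(\alpha,\beta)}(x)\prod_{j=1}^k(x-x_j)(1-x)^\alpha(1+x)^\beta\,dx$ would be the nonzero integral of a one-signed function, contradicting orthogonality of $P_n^{(\alpha,\beta)}$ to polynomials of smaller degree; hence the $n$ zeros are simple and all lie in $(-1,1)$. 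One may instead simply cite \cite[Theorem~3.3.1]{SzegoBook}.

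For part $(ii)$ the plan is to use the Jacobi differential equation together with uniqueness for linear ODEs. Clearing the Gamma quotients in \eqref{jacform} turns them into Pochhammer products, exhibiting $P_n^{(\alpha,\beta)}$ as a polynomial in $(\alpha,\beta,x)$; consequently the fact that $y=P_n^{(\alpha,\beta)}$ solves $(1-x^2)y''+\bigl(\beta-\alpha-(\alpha+\beta+2)x\bigr)y'+n(n+\alpha+\beta+1)y=0$, known for $\alpha,\beta>-1$, persists for all $\alpha,\beta$ by polynomial identity. The hypothesis $\alpha+\beta\notin\{-1-n,\dots,-2n\}$ forces $\deg P_n^{(\alpha,\beta)}=n$ by \cite[equation~(6.72.3)]{SzegoBook}, so $P_n^{(\alpha,\beta)}\not\equiv0$. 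A zero of order $\ge2$ at an interior point $x_0\in(-1,1)$ — a regular point of the ODE — would, with the data $y(x_0)=y'(x_0)=0$ and uniqueness, force $P_n^{(\alpha,\beta)}\equiv0$; so any multiple zero must occur at $x=\pm1$. Since $P_n^{(\alpha,\beta)}(1)=\binom{n+\alpha}{n}$ and $P_n^{(\alpha,\beta)}(-1)=(-1)^n\binom{n+\beta}{n}$ are nonzero precisely when $\alpha,\beta\notin\{-1,\dots,-n\}$, the remaining hypotheses keep $\pm1$ from being zeros at all, and all zeros are simple. (Equivalently one can invoke the closed-form Jacobi discriminant, \cite[equation~(6.71.5)]{SzegoBook}, whose vanishing is governed by the same conditions.)

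Part $(iii)$ is the substantive one, since here one has to locate the zeros, not merely count them or establish simplicity. The plan is to prove that $P_m^{(-\alpha-1,\beta-1)}$ has constant sign on $(-1,1)$ using the hypergeometric representation based at $x=-1$ together with Euler's transformation. With $z=\frac{1+x}{2}\in(0,1)$, and since $\beta$ is not a non-positive integer in either regime of the hypothesis,
\begin{align*}
P_m^{(-\alpha-1,\beta-1)}(x)&=\frac{(-1)^m(\beta)_m}{m!}\,{}_2F_1\bigl(-m,\,m-\alpha+\beta-1;\,\beta;\,z\bigr)\\
&=\frac{(-1)^m(\beta)_m}{m!}\,(1-z)^{\alpha+1}\,{}_2F_1\bigl(m+\beta,\,\alpha+1-m;\,\beta;\,z\bigr),
\end{align*}
the second equality being Euler's transformation ${}_2F_1(a,b;c;z)=(1-z)^{c-a-b}\,{}_2F_1(c-a,c-b;c;z)$, in which $c-a-b=\alpha+1$. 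Since $(1-z)^{\alpha+1}=\bigl(\frac{1-x}{2}\bigr)^{\alpha+1}>0$ on $(-1,1)$ and the prefactor is a nonzero constant, it suffices to show that $g(z):=\sum_{j\ge0}\frac{(m+\beta)_j(\alpha+1-m)_j}{(\beta)_j\,j!}\,z^j$ is positive for $z\in[0,1)$.

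I would establish this by checking that every coefficient of $g$ is positive. For $m\ge1$ one has $m+\beta>0$, so $(m+\beta)_j>0$ for all $j$; if $\beta,\alpha+1-m\in(0,\infty)$ then $(\alpha+1-m)_j$ and $(\beta)_j$ are positive as well, while if $\beta,\alpha+1-m\in(-1,0)$ then each of $(\alpha+1-m)_j$ and $(\beta)_j$ equals $1$ at $j=0$ and is negative for every $j\ge1$ (one negative factor, the rest positive), so their quotient is positive in either case. Hence $g(z)\ge g(0)=1>0$, and $P_m^{(-\alpha-1,\beta-1)}$ has no zeros in $(-1,1)$ (the case $m=0$ being trivial). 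The main obstacle is precisely this last step — finding the representation that converts "no zeros in $(-1,1)$" into a one-line sign check, and recognizing that the two parameter ranges in the statement are exactly those for which all coefficients of $g$ come out one-signed; parts $(i)$ and $(ii)$ are routine once the orthogonality argument and the Jacobi ODE (with its endpoint values at $\pm1$) are in hand.
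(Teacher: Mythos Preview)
Your proofs are correct and go well beyond what the paper does: the paper supplies no argument for this theorem, merely citing part~$(i)$ as classical, part~$(ii)$ as a consequence of the explicit Jacobi discriminant \cite[equation~(6.71.5)]{SzegoBook}, and part~$(iii)$ as a statement lifted from \cite[Section~5.2]{GMM}. Your ODE-uniqueness proof of $(ii)$ is a clean alternative to reading off the discriminant; one small wording fix is needed --- write ``regular point $x_0\in\mathbb{C}\setminus\{\pm1\}$'' rather than ``interior point $x_0\in(-1,1)$'', since a priori the polynomial could have multiple zeros outside $[-1,1]$ or off the real axis, and the same uniqueness argument handles those. Your treatment of $(iii)$ via the $x=-1$ hypergeometric representation and Euler's transformation is a genuinely self-contained proof in place of an external citation; the observation that the two parameter regimes in the hypothesis are exactly the ones making every coefficient of the transformed series ${}_2F_1(m+\beta,\alpha+1-m;\beta;z)$ nonnegative is the heart of the matter and is verified correctly. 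The trade-off: the paper's approach is shorter but opaque, while yours explains \emph{why} the parameter restrictions are precisely what is needed.
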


Part $(i)$ of Theorem \ref{cjaczero} is part of a well-known classical result. Part $(ii)$ follows from \cite[equation~(6.71.5)]{SzegoBook} and part $(iii)$ is stated in \cite[Section 5.2]{GMM}. Note that \cite[Section 6.72]{SzegoBook} states that \cite[equation~(6.71.5)]{SzegoBook} is valid for all $\alpha,\beta\in\mathbb{R}$.

\subsubsection{Classical and generalized Hermite polynomials}\label{cherm}

The classical Hermite polynomials can be defined from the classical Laguerre polynomials by the relations
\begin{equation}\label{hermlag}
H_{2n}(x)=(-4)^nn!L_n^{-1/2}\big(x^2\big),\qquad
H_{2n+1}(x)=2(-4)^nn!xL_n^{1/2}\big(x^2\big),
\end{equation}
from which it immediately follows that
\begin{equation}\label{hermder}
\frac{\rm d}{{\rm d}x}H_n(x)=2nH_{n-1}(x),\qquad n\geq1.
\end{equation}
We will also use the fact that
\begin{equation}\label{hermrecur}
H_n(x)=2xH_{n-1}(x)-H_{n-1}'(x).
\end{equation}
The relevant theorem about the zeros of Hermite polynomials is the fact that they are simple and lie on the real line.

To define the generalized Hermite polynomials, one first defines a partition $\lambda=(\lambda_1,\dots,\lambda_m)\allowbreak\in\mathbb{N}^m$, where
\begin{gather*}
\lambda_1\geq\lambda_2\geq\cdots\geq\lambda_m\geq1.
\end{gather*}
The generalized Hermite polynomial $H_{\lambda}$ is then defined by
\begin{gather*}
H_{\lambda}=\operatorname{Wr}[H_{\lambda_m},H_{\lambda_{m-1}+1},\dots,H_{\lambda_2+m-2},H_{\lambda_1+m-1}],
\end{gather*}
where $\operatorname{Wr}$ denotes the Wronskian determinant. It is known that the degree of $H_{\lambda}$ is $|\lambda|=\lambda_1+\cdots+\lambda_m$ (see \cite[Section~2]{KM}). We call the partition $\lambda$ an \textit{even} partition if $m$ is even and $\lambda_{2j}=\lambda_{2j-1}$ for $j=1,\dots,m/2$.
The relevant fact about the zeros of $H_{\lambda}$ is the following result from \cite[Section~2]{KM}.
\begin{Theorem}
If $\lambda$ is an even partition, then $H_{\lambda}$ has no real zeros.
\end{Theorem}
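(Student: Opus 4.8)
The plan is to pass from the polynomial Wronskian to a Wronskian of eigenfunctions of the quantum harmonic oscillator and then to apply the Krein--Adler nonsingularity criterion. Write $\lambda=(\lambda_1\ge\cdots\ge\lambda_m\ge1)$ and put $k_i=\lambda_{m-i+1}+(i-1)$ for $i=1,\dots,m$, so that $H_\lambda=\operatorname{Wr}[H_{k_1},\dots,H_{k_m}]$ with $k_1<k_2<\cdots<k_m$ (the inequalities are strict because $\lambda$ is weakly decreasing). Multiplying each entry by the nowhere-vanishing factor $\mathrm{e}^{-x^2/2}$ multiplies the Wronskian by $\mathrm{e}^{-mx^2/2}$, so $H_\lambda$ and $W:=\operatorname{Wr}[\phi_{k_1},\dots,\phi_{k_m}]$ have exactly the same real zeros, where $\phi_k(x)=H_k(x)\mathrm{e}^{-x^2/2}$ is the $k$th eigenfunction of $-\partial_x^2+x^2$. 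It therefore suffices to prove that $W$ has no real zeros.

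Next I would record the combinatorial translation of the hypothesis. We have $k_{i+1}-k_i=\lambda_{m-i}-\lambda_{m-i+1}+1$, which equals $1$ exactly when $\lambda_{m-i}=\lambda_{m-i+1}$; since $m$ is even and $\lambda_{2j}=\lambda_{2j-1}$, this occurs precisely for the odd indices $i=1,3,\dots,m-1$. Hence the index set $K=\{k_1,\dots,k_m\}$ is tiled by the $m/2$ pairs $\{k_{2j-1},k_{2j-1}+1\}$, and inspecting the smallest element of a maximal run of consecutive integers inside $K$ (and inducting) shows that every such run has even length. Equivalently, $K$ satisfies the Krein--Adler positivity condition $\prod_{i=1}^m(\ell-k_i)\ge0$ for every $\ell\in\mathbb{Z}_{\ge0}$.

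The analytic heart is the assertion that for an index set $K$ which is a disjoint union of blocks of consecutive integers of even length, the Wronskian $W$ has no real zeros. I would prove this by iterating single Darboux transformations of $-\partial_x^2+x^2$, deleting one consecutive pair of levels at a time and processing the even blocks from the smallest upward. The engine of each step is the classical lemma that, for a one-dimensional Schr\"odinger operator with the standard oscillation structure (the $n$th eigenfunction having exactly $n$ real zeros and a prescribed sign at $\pm\infty$), the Wronskian of two consecutive eigenfunctions has no real zeros; this follows from the strict interlacing of their zeros together with the sign of the derivative of that Wronskian. Deleting such a pair yields again a nonsingular operator whose eigenfunctions are $\operatorname{Wr}[\psi_n,\psi_{n+1},\psi_\ell]/\operatorname{Wr}[\psi_n,\psi_{n+1}]$ and whose oscillation structure is preserved, with node counts dropping by $2$ above the deleted pair. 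Since two pairs coming from the same even block remain consecutive in the relabelled spectrum after each step, the induction closes and $W$ is nodeless.

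I expect the main obstacle to be exactly this last part: the bookkeeping that keeps the consecutive-eigenfunction lemma applicable at every stage, namely checking that after each deletion the levels still to be removed form consecutive pairs with the correct node counts in the relabelled spectrum. One can sidestep rebuilding the Krein--Adler machinery by invoking it, or the treatment in \cite{KM}, directly; an alternative route is to exhibit, for even $\lambda$, a representation of $H_\lambda$ up to a positive constant as a determinant of a matrix that is positive definite for every real $x$, which makes the absence of real zeros manifest but relocates the difficulty to identifying the right quadratic form.
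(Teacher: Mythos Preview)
The paper does not prove this theorem at all: it simply records it as ``the following result from \cite[Section~2]{KM}'' and moves on. So there is no in-paper proof to compare against; the statement is imported wholesale from Kuijlaars--Milson.

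Your proposal is a correct outline of precisely the Krein--Adler argument that underlies the cited result. The reduction from $H_\lambda$ to the Wronskian of oscillator eigenfunctions via the factor $\mathrm{e}^{-mx^2/2}$ is right, and your combinatorial translation of the even-partition hypothesis into ``$K$ decomposes into maximal runs of even length'' (equivalently, the Krein--Adler sign condition $\prod_i(\ell-k_i)\ge0$ for all $\ell\in\mathbb{Z}_{\ge0}$) is accurate. The iterated-Darboux induction you sketch, deleting one consecutive pair at a time and invoking the interlacing lemma for adjacent eigenfunctions, is the standard route; your candid flag that the bookkeeping of node counts after each step is the delicate point is well placed, and you correctly note that one may instead cite Krein or Adler (or \cite{KM}) directly rather than reproving the machinery. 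In short: your argument is sound and supplies what the paper deliberately outsources; nothing here conflicts with the paper, which treats the theorem as background.
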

Simplicity of the zeros of $H_{\lambda}$ is not known, but it was conjectured in \cite{FHV} that the non-zero zeros of $H_{\lambda}$ are always simple (even if $\lambda$ is not even).

\subsection{Exceptional orthogonal polynomials}

Now we are ready to shift our focus to the exceptional orthogonal polynomials. We will refer to an XOP sequence with $m$ missing degrees as an $X_m$ polynomial sequence. These polynomials have been heavily studied and many formulas are known that will be relevant to our calculations. We will review these facts as well as some facts about the zeros of these polynomials.

\subsubsection[Exceptional X\_m-Laguerre polynomials]{Exceptional $\boldsymbol{X_m}$-Laguerre polynomials}

Exceptional Laguerre polynomials were originally introduced in \cite{GKM09} and have been studied in papers such as \cite{BK18,D14a,D21,DP,GMM,HOS,KLO,LLMS,LO,Q09,SimExcep}. We will focus on the $X_m$ polynomials that come from the application of a single Darboux transformation to the classical Laguerre differential operator. These polynomials can be of type-I, type-II, or type-III (the more general setting was discussed in \cite{BK18}). While there is an extensive theory associated to these polynomials, we will only discuss the portion that is relevant to our new results and refer the reader to the aforementioned references for additional information.

In the type-I case, we require $\alpha>0$ and $m\in\mathbb{N}$. The degree $n$ polynomial in this sequence is denoted $L_{m,n}^{I,(\alpha)}(x)$ and $n$ can be any natural number strictly larger than $m-1$ (that is, polynomials of degree $0,1,\dots,m-1$ are omitted from this sequence). For such a choice of parameters, it holds that
\begin{equation}\label{xlag1form}
L_{m,n}^{I,(\alpha)}(x)=L_m^{(\alpha)}(-x)L_{n-m}^{(\alpha-1)}(x)+L_{m}^{(\alpha-1)}(-x)L_{n-m-1}^{(\alpha)}(x)
\end{equation}
(see \cite[Proposition~3.1]{GMM} or \cite[equation~(3.2)]{LLMS}). One can construct these polynomials using partitions as in \cite{BK18} by considering the empty partition and the singleton $\{m\}$ (see \cite[Proposition~4]{BK18}).

In the type-II case, we require $m\in\mathbb{N}$ and $\alpha>m-1$. The degree $n$ polynomial in this sequence is denoted $L_{m,n}^{II,(\alpha)}(x)$ and $n$ can be any natural number strictly larger than $m-1$. For such a choice of parameters, it holds that
\begin{equation}\label{xlag2form}
L_{m,n}^{II,(\alpha)}(x)=xL_m^{(-\alpha-1)}(x)L_{n-m-1}^{(\alpha+2)}(x)+(m-\alpha-1)L_{m}^{(-\alpha-2)}(x)L_{n-m}^{(\alpha+1)}(x)
\end{equation}
(see \cite[Proposition~4.1]{GMM}). One can construct these polynomials using partitions as in \cite{BK18} by considering the empty partition and the $m$-tuple $\{1,1,\dots,1\}$ (see \cite[Proposition~4]{BK18}).

In the type-III case, we require $\alpha\in(-1,0)$ and $m\in\mathbb{N}$. The degree $n$ polynomial in this sequence is denoted $L_{m,n}^{III,(\alpha)}(x)$ and $n$ can be $0$ or any natural number strictly larger than $m$. For such a choice of parameters and with $n\geq m+1$, it holds that
\begin{equation}\label{xlag3form}
L_{m,n}^{III,(\alpha)}(x)=xL_{n-m-2}^{(\alpha+2)}(x)L_{m}^{(-\alpha-1)}(-x)+(m+1)L_{m+1}^{(-\alpha-2)}(-x)L_{n-m-1}^{(\alpha+1)}(x)
\end{equation}
(see \cite[equation~(5.12)]{LLMS}). If $n=0$, then $L_{m,0}^{III,(\alpha)}(x)=1$. One can construct these polynomials using partitions as in \cite{BK18} by considering the $m$-tuple $\{1,1,\dots,1\}$ and the empty partition (see \cite[Proposition 4]{BK18}).
The starting point for our investigation comes from the following formula, which is \cite[Theorem 5.2]{LLMS}:
\begin{equation}\label{lag3llms}
L_{m,n}^{III,(\alpha)}(x)=n\int_0^xL_{n-m-1}^{(\alpha+1)}(t)L_m^{(-\alpha-1)}(-t){\rm d}t+(m+1)\binom{n-m+\alpha}{n-m-1}\binom{m-\alpha-1}{m+1},
\end{equation}
when $n>m$.

 In the case $m=1$, Kelly, Liaw and Osborn derived a determinantal formula for exceptional Laguerre polynomials using what they call ``adjusted moments" (see \cite[Theorem 5.1]{KLOa}; see also~\cite{LO}). This was subsequently generalized to $L_{2,n}^{I,(\alpha)}$ in \cite{KLO}. Dur\'an \cite{D14a} also discovered a method for calculating exceptional Laguerre polynomials using determinantal formulas. Our new results for exceptional Laguerre polynomials appear as Theorems~\ref{lag1detform}, \ref{lag2detform} and~\ref{lag3detform}, all of which are determinantal formulas involving the zeros of classical Laguerre polynomials. In the type-I and type-II cases, we will need a result analogous to \eqref{lag3llms} (see Theorem \ref{lagints}), while in the type-III case, we can appeal directly to \eqref{lag3llms}.

\subsubsection[Exceptional X\_m-Jacobi polynomials]{Exceptional $\boldsymbol{X_m}$-Jacobi polynomials}

Exceptional Jacobi polynomials were originally introduced in \cite{GKM09} and have been studied in papers such as \cite{Bonn,D17,D21,GGM21,GMM,HOS,KLOa,LLS,Q09,SimExcep}. The polynomials we will consider come from the application of a single Darboux transformation to the classical Jacobi differential operator and are thus eigenfunctions of a second order differential operator (a more general setting was discussed in \cite{Bonn}). They depend on a parameter $m\in\mathbb{N}$ and two real parameters $\alpha$ and $\beta$, so we will denote the degree $n$ polynomial by $P_{m,n}^{(\alpha,\beta)}$ and require $n\geq m$ (that is, polynomials of degree $0,1,\dots,m-1$ are omitted from this sequence). To ensure certain properties of the polynomial degree and the zeros of these polynomials, we will insist that $\beta>0$, that $\alpha+1-m>0$, and that $\alpha+1-m-\beta\not\in\{0,1,\dots,m-1\}$. In particular, the assumption $\alpha+1-m-\beta\not\in\{0,1,\dots,m-1\}$ guarantees that the polynomial $P_m^{(-\alpha-1,\beta-1)}$ has degree $m$ (see \cite[equation~(6.72.3)]{SzegoBook}). Further justification for these assumptions is discussed in \cite{GMM}, but one can already see their importance from Theorem \ref{cjaczero}. Under these assumptions, the following formula is valid:
\begin{gather}
P_{m,n}^{(\alpha,\beta)}(x)=\frac{(-1)^m}{\alpha+1+n-m} \bigg(\frac{(x-1)(1+\alpha+\beta+n-m)}{2}P_m^{(-\alpha-1,\beta-1)}(x)P_{n-m-1}^{(\alpha+2,\beta)}(x)\nonumber
\\ \hphantom{P_{m,n}^{(\alpha,\beta)}(x)=}
{}+(\alpha+1-m)P_m^{(-\alpha-2,\beta)}(x)P_{n-m}^{(\alpha+1,\beta-1)}(x)\bigg),
\label{xjacform}
\end{gather}
when $n\geq m$ (see \cite[Section 5]{GMM}). One can construct these polynomials using partitions as in~\cite{Bonn} by considering the empty partition and the $m$-tuple $\{1,1,\dots,1\}$ (see \cite[equation~(5.4)]{Bonn}).

Dur\'an \cite{D17} has found determinantal formulas for exceptional Jacobi polynomials and the aforementioned results from~\cite{KLOa} also apply to $P_{1,n}^{(\alpha,\beta)}$. Our new results for these polynomials will include an integral formula comparable to~\eqref{lag3llms} (see Theorem \ref{jacint}) and a determinantal formula for the exceptional Jacobi polynomials that involves the zeros of the classical Jacobi polynomials (see Theorem~\ref{jacdetform}).

\subsubsection[Exceptional X\_m-Hermite polynomials]{Exceptional $\boldsymbol{X_m}$-Hermite polynomials}

Exceptional Hermite polynomials were originally introduced in \cite{GGM14} and have been studied in papers such as \cite{D14,D15,GKKM16,KM,SimExcep}. The polynomials we will consider depend on a choice of partition~$\lambda$ as in Section~\ref{cherm}.
Given such a partition $\lambda=(\lambda_1,\dots,\lambda_m)$, one defines the exceptional Hermite polynomial $H_{|\lambda|,n}^{(\lambda)}$ by
\begin{gather*}
H_{|\lambda|,n}^{(\lambda)}=\operatorname{Wr}[H_{\lambda_m},H_{\lambda_{m-1}+1},\dots, H_{\lambda_2+m-2},H_{\lambda_1+m-1},H_{n-|\lambda|+m}]
\end{gather*}
for $n\geq|\lambda|-m$. Even for such $n$, $H_{|\lambda|,n}^{(\lambda)}$ is identically zero unless $n$ is in the set $\mathbb{N}_{\lambda}$ defined by
\begin{gather*}
\mathbb{N}_{\lambda}=\{n\geq|\lambda|-m\colon n\neq|\lambda|+\lambda_j-j,\ \text{for}\ j=1,2,\dots,m\}.
\end{gather*}
If $n\in\mathbb{N}_{\lambda}$ then the degree of $H_{|\lambda|,n}^{(\lambda)}$ is $n$ (see \cite{KM}).

Dur\'an \cite{D14} has found determinantal formulas for exceptional Hermite polynomials. Our new results for these polynomials will include an integral formula for $H_{2,n}^{(\{1,1\})}$ comparable to \eqref{lag3llms} (see Theorem~\ref{hermint}) and a determinantal formula for $H_{2,n}^{(\{1,1\})}$ that involves the zeros of the classical Hermite polynomials (see Theorem~\ref{hermdetform}). In this sense, our results are the natural extension of \cite{KLOa,KLO,LO} to the exceptional Hermite polynomials because we only consider the case when a small number of degrees are missing from the sequence.

\section{New formulas}\label{newform}

In this section we will present both integral and determinantal formulas for various families of exceptional orthogonal polynomials. All of the formulas and information about zeros that we need to complete our calculations was provided in the previous section.

\subsection[X\_m Laguerre polynomials]{$\boldsymbol{X_m}$ Laguerre polynomials}\label{xmlag}

We begin our analysis by stating the analog of \eqref{lag3llms} for the type-I and type-II exceptional $X_m$-Laguerre polynomials.

\begin{Theorem}\label{lagints}
The following formulas are valid:
\begin{itemize}\itemsep=0pt
\item[$(i)$] If $m\in\mathbb{N}$, $\alpha>0$, and $n\geq m$, then
\begin{gather*}
L_{m,n}^{I,(\alpha)}(x)=\frac{\alpha+n}{x^{\alpha}}\int_0^xt^{\alpha-1} L_m^{(\alpha-1)}(-t)L_{n-m}^{(\alpha-1)}(t)\,{\rm d}t.
\end{gather*}
\item[$(ii)$] If $m\in\mathbb{N}$, $\alpha>m-1$, and $n\geq m$, then
\begin{align*}
L_{m,n}^{II,(\alpha)}(x)=-(\alpha+n+1-2m){\rm e}^x\int_{x}^{\infty}{\rm e}^{-t}L_m^{(-\alpha-1)}(t)L_{n-m}^{(\alpha+1)}(t)\,{\rm d}t.
\end{align*}
\end{itemize}
\end{Theorem}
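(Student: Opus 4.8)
The plan is to verify each identity by differentiating the right-hand side and checking that the resulting first-order differential relation, together with an initial condition, uniquely characterizes $L_{m,n}^{I,(\alpha)}$ (resp.\ $L_{m,n}^{II,(\alpha)}$). Start with part $(i)$. Write $R(x) = x^{-\alpha}\int_0^x t^{\alpha-1} L_m^{(\alpha-1)}(-t) L_{n-m}^{(\alpha-1)}(t)\,{\rm d}t$, so that $x R'(x) + \alpha R(x) = x^{-\alpha+1}\cdot x^{\alpha-1} L_m^{(\alpha-1)}(-x) L_{n-m}^{(\alpha-1)}(x) = L_m^{(\alpha-1)}(-x) L_{n-m}^{(\alpha-1)}(x)$. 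Thus $(\alpha+n)R$ satisfies the inhomogeneous equation $x y' + \alpha y = (\alpha+n) L_m^{(\alpha-1)}(-x) L_{n-m}^{(\alpha-1)}(x)$. The task is then to show that $L_{m,n}^{I,(\alpha)}$ satisfies this same equation with the same value at $x=0$. For the initial condition, note $R(0) = 0$ when $\alpha > 0$ (the integral vanishes and $x^{-\alpha}$ times it tends to $0$ since the integrand is bounded near $0$), so I must check $L_{m,n}^{I,(\alpha)}(0) = 0$; this follows from \eqref{xlag1form} evaluated at $x=0$ using \eqref{lagform1} applied to $L_m^{(\alpha-1)}(0) = L_m^{(\alpha)}(0) - L_{m-1}^{(\alpha)}(0)$ combined with $L_{n-m}^{(\alpha-1)}(0)$ and $L_{n-m-1}^{(\alpha)}(0)$, which should telescope to zero.

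The core computation is the differential relation $x \frac{{\rm d}}{{\rm d}x} L_{m,n}^{I,(\alpha)}(x) + \alpha L_{m,n}^{I,(\alpha)}(x) = (\alpha+n) L_m^{(\alpha-1)}(-x) L_{n-m}^{(\alpha-1)}(x)$. I would prove this directly from \eqref{xlag1form} by applying the product rule and then repeatedly using \eqref{lagder}, \eqref{lagform1}, and \eqref{lagform2} (and their versions with $x \mapsto -x$) to collapse all the resulting terms. The main obstacle is precisely this identity: it requires carefully juggling several three-term and contiguous relations at shifted parameters $\alpha$, $\alpha-1$, $\alpha+1$ and arguments $x$, $-x$, and it is easy to lose track of a sign coming from the $-x$ argument in \eqref{lagder}. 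An efficient route may be to introduce $A_j(x) = L_j^{(\alpha-1)}(-x)$, $B_j(x) = L_j^{(\alpha-1)}(x)$ etc., record the action of $x\partial_x + c$ on each building block as a linear combination of neighbors, and assemble. Because both sides are polynomials, once the first-order ODE and one initial value agree, equality is forced (the homogeneous equation $xy' + \alpha y = 0$ has no nonzero polynomial solution).

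Part $(ii)$ is entirely parallel, with the measure ${\rm e}^{-t}\,{\rm d}t$ on $[x,\infty)$ in place of $t^{\alpha-1}\,{\rm d}t$ on $[0,x]$. Setting $S(x) = {\rm e}^x \int_x^\infty {\rm e}^{-t} L_m^{(-\alpha-1)}(t) L_{n-m}^{(\alpha+1)}(t)\,{\rm d}t$, one computes $S'(x) = S(x) - L_m^{(-\alpha-1)}(x) L_{n-m}^{(\alpha+1)}(x)$, so $-(\alpha+n+1-2m)S$ satisfies $y' - y = (\alpha+n+1-2m) L_m^{(-\alpha-1)}(x) L_{n-m}^{(\alpha+1)}(x)$; the convergence of the improper integral is guaranteed by the exponential weight. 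I then need the companion differential relation $\frac{{\rm d}}{{\rm d}x} L_{m,n}^{II,(\alpha)}(x) - L_{m,n}^{II,(\alpha)}(x) = -(\alpha+n+1-2m) L_m^{(-\alpha-1)}(x) L_{n-m}^{(\alpha+1)}(x)$, derived from \eqref{xlag2form} via the same three Laguerre identities (now one must also differentiate the explicit factor $x$ in the first summand). Matching the "initial" data here means comparing leading-order asymptotics as $x \to \infty$ rather than a value at a finite point: both sides are polynomials of degree $n$, and since $y' - y = g$ with $g$ polynomial has a unique polynomial solution, it suffices to check the top-degree coefficient on both sides agrees — i.e.\ that the leading coefficient of $L_{m,n}^{II,(\alpha)}$ matches $(\alpha+n+1-2m)$ times the leading coefficient of $L_m^{(-\alpha-1)}(x) L_{n-m}^{(\alpha+1)}(x)$, computed from \eqref{lagform}. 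As in part $(i)$, the sign bookkeeping in the contiguous-relation gymnastics is where errors are most likely to creep in, so I would organize that step as a short lemma recording $(\partial_x + c)$ and $(x\partial_x + c)$ acting on each $L_j^{(\gamma)}(\pm x)$.
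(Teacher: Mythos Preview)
Your strategy matches the paper's: both proofs rest on the first-order relations $x\,\frac{\rm d}{{\rm d}x}L_{m,n}^{I,(\alpha)} + \alpha L_{m,n}^{I,(\alpha)} = (\alpha+n)L_m^{(\alpha-1)}(-x)L_{n-m}^{(\alpha-1)}(x)$ and \eqref{type2ode}, followed by integration against $t^{\alpha-1}$ and ${\rm e}^{-t}$ respectively. The paper derives the type-I relation just as you outline (from \eqref{xlag1form} via \eqref{lagder}--\eqref{lagform2}) and simply quotes the type-II relation from \cite{GMM} rather than re-deriving it.

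Two concrete slips would derail the argument if followed literally. In part~$(i)$, your claim $R(0)=0$ is false: since the integrand behaves like $f(0)\,t^{\alpha-1}$ near $0$ with $f(0)=L_m^{(\alpha-1)}(0)L_{n-m}^{(\alpha-1)}(0)>0$, one gets $R(0)=f(0)/\alpha$; correspondingly $L_{m,n}^{I,(\alpha)}(0)\neq 0$ (e.g.\ $L_{1,1}^{I,(1)}(x)=x+2$), so the ``telescope to zero'' you anticipate will not occur. The repair is precisely your own parenthetical remark: the homogeneous equation $xy'+\alpha y=0$ has no nonzero polynomial solution, so two polynomial solutions of the inhomogeneous equation coincide automatically and no initial value needs checking. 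In part~$(ii)$ there is a sign error: from $S'-S=-g$ you correctly obtain that $-(\alpha+n+1-2m)S$ satisfies $y'-y=+(\alpha+n+1-2m)g$, so the relation you must verify for $L_{m,n}^{II,(\alpha)}$ carries a plus sign on the right (this is exactly \eqref{type2ode}), not the minus you wrote. As in~$(i)$, uniqueness of polynomial solutions to $y'-y=g$ then makes the proposed leading-coefficient check superfluous.
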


\begin{proof}
$(i)$ It follows from \cite[equation~(17)]{GMM} that if $y_n(x)=L_{m,n}^{I,(\alpha)}(x)$, then there exists a~polynomial $Q^I_{n-m}(x)$ so that
\begin{equation}\label{type1ode}
xy_n'(x)+\alpha y_n(x)=Q^I_{n-m}(x)L_{m}^{(\alpha-1)}(-x).
\end{equation}
Our first task is to show that $Q^I_{n-m}(x)=(\alpha+n)L_{n-m}^{(\alpha-1)}(x)$.

Starting from \eqref{xlag1form} and twice using \eqref{lagform1} we calculate
\begin{align*}
y_n'(x)={}&-L_m^{(\alpha)}(-x)L_{n-m-1}^{(\alpha)}(x)+L_{m-1}^{(\alpha+1)}(-x) L_{n-m}^{(\alpha-1)}(x)-L_{m}^{(\alpha-1)}(-x)L_{n-m-2}^{(\alpha+1)}(x)
\\
&+L_{m-1}^{(\alpha)}(-x) L_{n-m-1}^{(\alpha)}(x)\\
={}&L_{n-m-1}^{(\alpha)}(x)\big(L_{m-1}^{(\alpha)}(-x)-L_m^{(\alpha)}(-x)\big)
\\
&+L_{m-1}^{(\alpha+1)}(-x)L_{n-m}^{(\alpha-1)}(x)-L_{m}^{(\alpha-1)}(-x)L_{n-m-2}^{(\alpha+1)}(x)
\\
={}&-L_{n-m-1}^{(\alpha)}(x)L_m^{(\alpha-1)}(-x)+L_{m-1}^{(\alpha+1)}(-x) L_{n-m}^{(\alpha-1)}(x)-L_{m}^{(\alpha-1)}(-x)L_{n-m-2}^{(\alpha+1)}(x)
\\
={}&-L_m^{(\alpha-1)}(-x)\big(L_{n-m-1}^{(\alpha)}(x)+L_{n-m-2}^{(\alpha+1)}(x)\big) +L_{m-1}^{(\alpha+1)}(-x)L_{n-m}^{(\alpha-1)}(x)
\\
={}&-L_m^{(\alpha-1)}(-x)L_{n-m-1}^{(\alpha+1)}(x)+L_{m-1}^{(\alpha+1)}(-x)L_{n-m}^{(\alpha-1)}(x).
\end{align*}
Thus
\begin{align*}
xy_n'(x)+\alpha y_n(x)={}&L_{n-m}^{(\alpha-1)}(x)\big(xL_{m-1}^{(\alpha+1)}(-x)+\alpha L_m^{(\alpha)}(-x)\big)
\\
&+L_m^{(\alpha-1)}(-x)\big(\alpha L_{n-m-1}^{(\alpha)}(x)-xL_{n-m-1}^{(\alpha+1)}(x)\big).
\end{align*}
We use \eqref{lagform2} to rewrite the right-hand side of this equation as
\begin{gather*}
L_{n-m}^{(\alpha-1)}(x)\big(xL_{m-1}^{(\alpha+1)}(-x)+\alpha L_m^{(\alpha)}(-x)\big)+(n-m)L_m^{(\alpha-1)}(-x)\big(L_{n-m}^{(\alpha)}(x)-L_{n-m-1}^{(\alpha)}(x)\big).
\end{gather*}
Now we again employ \eqref{lagform1} to rewrite this as
\begin{gather*}
L_{n-m}^{(\alpha-1)}(x)\big(xL_{m-1}^{(\alpha+1)}(-x)+\alpha L_m^{(\alpha)}(-x)\big)+(n-m)L_m^{(\alpha-1)}(-x)L_{n-m}^{(\alpha-1)}(x)
\\ \qquad
{}=L_{n-m}^{(\alpha-1)}(x)\big(xL_{m-1}^{(\alpha+1)}(-x)+\alpha L_m^{(\alpha)}(-x)+(n-m)L_m^{(\alpha-1)}(-x)\big).
\end{gather*}
The proof of the desired formula for $Q_{n-m}^I$ will be complete if we can show that
\begin{gather*}
\alpha L_m^{(\alpha)}(t)-tL_{m-1}^{(\alpha+1)}(t)=(m+\alpha)L_{m}^{(\alpha-1)}(t),
\end{gather*}
which follows easily from \eqref{lagform1} and \eqref{lagform2}.

Consider now the first order linear ODE \eqref{type1ode} with $Q^I_{n-m}(x)=(\alpha+n)L_{n-m}^{(\alpha-1)}(x)$. Multiply both sides by $x^{\alpha-1}$ and integrate with respect to $x$ to obtain
\begin{gather*}
x^{\alpha}y_n(x)=(\alpha+n)\int_c^xt^{\alpha-1}L_m^{(\alpha-1)}(-t)L_{n-m}^{(\alpha-1)}(t)\,{\rm d}t
\end{gather*}
for some constant $c$ so that the left-hand side of this expression is equal to $0$ when evaluated at~$c$. Clearly, $c=0$ is a valid choice and the desired formula follows.

$(ii)$ Recall \cite[equation~(41)]{GMM}, which states
\begin{equation}\label{type2ode}
\frac{\rm d}{{\rm d}x}L_{m,n}^{II,(\alpha)}(x)-L_{m,n}^{II,(\alpha)}(x)=(\alpha+n+1-2m)L_{m}^{(-\alpha-1)}(x)L_{n-m}^{(\alpha+1)}(x)
\end{equation}
(note that we correct a typo in the constant from \cite[equation~(41)]{GMM}). Multiply both sides of this equation by ${\rm e}^{-x}$ and integrate to obtain
\begin{equation}
{\rm e}^{-x}L_{m,n}^{II,(\alpha)}(x)=(\alpha+n+1-2m)\int_{c}^{x}{\rm e}^{-t}L_m^{(-\alpha-1)}(t)L_{n-m}^{(\alpha+1)}(t)\,{\rm d}t
\end{equation}
for a constant $c$ so that the left-hand side of this expression is equal to $0$ when evaluated at $c$. Clearly, taking the limit as $c\rightarrow\infty$ leads to a valid choice and the desired formula follows.
\end{proof}

We can use Theorem \ref{lagints} to prove the following determinantal formulas for the exceptional Laguerre polynomials. Different determinantal formulas for $L_{m,n}^{I,(\alpha)}$ ($m=1,2$) involving moments of an appropriate measure are presented in \cite{KLOa,KLO,LO}.

\begin{Theorem}\label{lag1detform}
For any $\tau\in\mathbb{C}$, let $\big\{x_{j,N}^{(\tau)}\big\}_{j=1}^N$ denote the zeros of $L_N^{(\tau)}$. Fix $m\in\mathbb{N}$ and $\alpha>0$ and let
\begin{gather*}
\big\{X^{(\alpha)}_{j,m,n}\big\}_{j=1}^n=\big\{{-}x_{j,m}^{(\alpha-1)}\big\}_{j=1}^m \cup\big\{x_{j-m,n-m}^{(\alpha-1)}\big\}_{j=m+1}^{n}.
\end{gather*}
Let $M_n^I$ be the $(n+1)\times(n+1)$ matrix whose $j^{th}$ row is
\begin{gather*}
\big[1,\big(X_{j,m,n}^{(\alpha)}\big),\big(X_{j,m,n}^{(\alpha)}\big)^2,\dots, \big(X_{j,m,n}^{(\alpha)}\big)^n\big]
\end{gather*}
for $j=1,\dots,n$ and let the last row of $M_n^I$ be
\begin{gather*}
\bigg[\frac{1}{\alpha},\frac{x}{1+\alpha},\frac{x^2}{2+\alpha},\dots,\frac{x^n}{n+\alpha}\bigg].
\end{gather*}
Then for $n\geq m$,
\begin{gather*}
L_{m,n}^{I,(\alpha)}(x)=\frac{(-1)^{n-m}(\alpha+n)}{m!(n-m)!\prod_{1\leq i<j\leq n}\big(X^{(\alpha)}_{j,m,n}-X^{(\alpha)}_{i,m,n}\big)}\det\big(M_n^I\big).
\end{gather*}
\end{Theorem}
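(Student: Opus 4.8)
The plan is to combine the integral formula from Theorem~\ref{lagints}$(i)$ with a linear-algebra identity expressing a polynomial in terms of its values at prescribed points. Starting from
\begin{gather*}
L_{m,n}^{I,(\alpha)}(x)=\frac{\alpha+n}{x^{\alpha}}\int_0^x t^{\alpha-1}L_m^{(\alpha-1)}(-t)L_{n-m}^{(\alpha-1)}(t)\,{\rm d}t,
\end{gather*}
I would first observe that the product $L_m^{(\alpha-1)}(-t)L_{n-m}^{(\alpha-1)}(t)$ is a polynomial in $t$ of degree exactly $n$ whose roots are precisely the collection $\big\{X_{j,m,n}^{(\alpha)}\big\}_{j=1}^n$ (here one uses that $L_m^{(\alpha-1)}$ has simple zeros by Theorem~\ref{clagzero}$(ii)$ applied with $\alpha-1$ in place of the parameter, noting $\alpha>0$, together with Theorem~\ref{clagzero}$(i)$ for the zeros of $L_{n-m}^{(\alpha-1)}$; disjointness of the two zero sets follows from the sign information in Theorem~\ref{clagzero}$(iii)$, since $-x_{j,m}^{(\alpha-1)}<0<x_{j-m,n-m}^{(\alpha-1)}$). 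Its leading coefficient is the product of the leading coefficients of the two Laguerre factors, namely $\big((-1)^m/m!\big)(-1)^m\cdot(-1)^{n-m}/(n-m)!=(-1)^{n-m}/\big(m!(n-m)!\big)$, so
\begin{gather*}
L_m^{(\alpha-1)}(-t)L_{n-m}^{(\alpha-1)}(t)=\frac{(-1)^{n-m}}{m!(n-m)!}\prod_{j=1}^n\big(t-X_{j,m,n}^{(\alpha)}\big).
\end{gather*}

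Next I would integrate termwise. Writing $p(t)=\sum_{k=0}^n a_k t^k$ for this polynomial, one has $\int_0^x t^{\alpha-1}p(t)\,{\rm d}t=\sum_{k=0}^n a_k x^{k+\alpha}/(k+\alpha)$, and therefore
\begin{gather*}
L_{m,n}^{I,(\alpha)}(x)=(\alpha+n)\sum_{k=0}^n \frac{a_k x^k}{k+\alpha}.
\end{gather*}
The task is now to identify this linear combination of the entries $\big[1/\alpha,\,x/(1+\alpha),\dots,x^n/(n+\alpha)\big]$ of the last row of $M_n^I$ with $(-1)^{n-m}(\alpha+n)/\big(m!(n-m)!\big)$ times the cofactor expansion of $\det(M_n^I)$ along that last row. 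Expanding $\det(M_n^I)$ along the bottom row gives exactly $\sum_{k=0}^n (-1)^{n+1+k+1}\frac{x^k}{k+\alpha}\,\mathcal{M}_k$, where $\mathcal{M}_k$ is the minor obtained by deleting the last row and the $(k+1)$-st column from $M_n^I$; that minor is a (generalized) Vandermonde determinant in the nodes $X_{1,m,n}^{(\alpha)},\dots,X_{n,m,n}^{(\alpha)}$ with the $k$-th column omitted. The classical cofactor identity for Vandermonde matrices states precisely that $\sum_{k=0}^n (-1)^{n-k}\mathcal{M}_k\, t^k=\prod_{1\le i<j\le n}\big(X_{j,m,n}^{(\alpha)}-X_{i,m,n}^{(\alpha)}\big)\cdot\prod_{j=1}^n\big(t-X_{j,m,n}^{(\alpha)}\big)$, so $(-1)^{n-k}\mathcal{M}_k=a_k\,m!(n-m)!\,(-1)^{n-m}\big/\,\big[(-1)^{n-m}/(m!(n-m)!)\big]$... more cleanly: $(-1)^{n-k}\mathcal{M}_k = V\cdot \big(\text{coefficient of }t^k\text{ in }\prod(t-X_{j,m,n}^{(\alpha)})\big)$, where $V=\prod_{1\le i<j\le n}\big(X_{j,m,n}^{(\alpha)}-X_{i,m,n}^{(\alpha)}\big)$. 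Matching this against $a_k=\frac{(-1)^{n-m}}{m!(n-m)!}\cdot\big(\text{coefficient of }t^k\text{ in }\prod(t-X_{j,m,n}^{(\alpha)})\big)$ yields $a_k=\frac{(-1)^{n-m}}{m!(n-m)!\,V}\,(-1)^{n-k}\mathcal{M}_k$, and substituting into the formula for $L_{m,n}^{I,(\alpha)}(x)$ produces exactly the claimed determinantal expression once the signs are collated with the cofactor signs $(-1)^{n+1+k+1}=(-1)^{n-k}$.

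The one point requiring genuine care — the main obstacle — is the bookkeeping of the signs and the exact form of the Vandermonde cofactor identity: one must verify that the minor $\mathcal{M}_k$ of the top-left $n\times n$ block, with column $k+1$ deleted, really equals (up to the stated sign) the coefficient of $t^k$ in $V\prod_j(t-X_{j,m,n}^{(\alpha)})$, and that the cofactor sign $(-1)^{(n+1)+(k+1)}$ coming from expanding along row $n+1$ combines with the $(-1)^{n-k}$ from the elementary-symmetric-function expansion to leave the single overall factor $(-1)^{n-m}$ displayed in the theorem. I would handle this by stating the Vandermonde cofactor lemma cleanly (for an $n\times n$ Vandermonde matrix $V_n(z_1,\dots,z_n)$ one has $\det V_n = \prod_{i<j}(z_j-z_i)$ and the signed minors generate the monic polynomial $\prod(t-z_i)$), applying it with $z_j=X_{j,m,n}^{(\alpha)}$, and then simply tracking constants; everything else is the termwise integration and the factorization of the Laguerre product already justified above. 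The hypothesis $n\ge m$ is exactly what guarantees $n-m\ge 0$ so that $L_{n-m}^{(\alpha-1)}$ is a bona fide polynomial and the node count works out, and $\alpha>0$ is what makes both the integral convergent at $0$ and the denominators $k+\alpha$ nonzero.
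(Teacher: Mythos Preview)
Your approach is correct and takes a genuinely different route from the paper's. Both proofs rest on the same key input --- the first-order relation $x\big(L_{m,n}^{I,(\alpha)}\big)' + \alpha L_{m,n}^{I,(\alpha)} = (\alpha+n)L_m^{(\alpha-1)}(-x)L_{n-m}^{(\alpha-1)}(x)$, equivalently the integral formula of Theorem~\ref{lagints}$(i)$ --- but they pass from there to the determinant in opposite directions. The paper introduces an auxiliary matrix $\tilde{M}_n^I$ (differing from $M_n^I$ only by the column scalings $(k+\alpha)$) whose first $n$ rows encode the vanishing of $xy'+\alpha y$ at the nodes $X_{j,m,n}^{(\alpha)}$, writes the linear system $\tilde{M}_n^I\vec{a}=\vec{b}_I$ with $\vec{a}$ the coefficient vector of $L_{m,n}^{I,(\alpha)}$, and argues that $\det\big(\tilde{M}_n^I\big)$ is a degree-$n$ polynomial in $x$ vanishing at every zero of $L_{m,n}^{I,(\alpha)}$, hence proportional to it; the constant is then read off from the leading Vandermonde minor. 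You instead go forward: you factor the integrand, integrate $t^{\alpha-1+k}$ termwise, and recognise the resulting linear combination of the entries $x^k/(k+\alpha)$ as exactly the cofactor expansion of $\det\big(M_n^I\big)$ along its last row, via the standard identity that the signed $n\times n$ minors of the Vandermonde block generate the monic polynomial $\prod_j\big(t-X_{j,m,n}^{(\alpha)}\big)$. Your route is more explicit and sidesteps any question about multiplicities of the zeros of $L_{m,n}^{I,(\alpha)}$; the paper's structural template, on the other hand, transfers verbatim to the type-II, type-III, Jacobi and Hermite cases with essentially no new computation.

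One small correction: your citations of Theorem~\ref{clagzero}$(ii)$ and~$(iii)$ are misdirected --- those parts concern $L_m^{(-\alpha-1)}$, not $L_m^{(\alpha-1)}$. Since $\alpha>0$ gives $\alpha-1>-1$, part~$(i)$ alone yields both simplicity and positivity of the zeros of $L_m^{(\alpha-1)}$ and $L_{n-m}^{(\alpha-1)}$, and hence distinctness of the $n$ nodes $X_{j,m,n}^{(\alpha)}$ (this is precisely how the paper justifies it).
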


\begin{Remark} Notice that the factor $\prod_{1\leq i<j\leq n}\big(X^{(\alpha)}_{j,m,n}-X^{(\alpha)}_{i,m,n}\big)$ is the Vandermonde determinant corresponding to the points $\big\{X^{(\alpha)}_{j,m,n}\big\}_{j=1}^n$. We will see similar formulas in our later results.
\end{Remark}

\begin{proof}Write
\begin{gather*}
L_{m,n}^{I,(\alpha)}(x)=\sum_{j=0}^na_{j,n}x^j.
\end{gather*}
Let $\tilde{M}_n^I$ be the $(n+1)\times(n+1)$ matrix whose $j^{th}$ row is
\begin{gather*}
\big[\alpha,\big(X_{j,m,n}^{(\alpha)}\big)(1+\alpha),\big(X_{j,m,n}^{(\alpha)}\big)^2(2+\alpha), \dots,\big(X_{j,m,n}^{(\alpha)}\big)^n(n+\alpha)\big]
\end{gather*}
for $j=1,\dots,n$ and let the last row of $\tilde{M}_n^I$ be $\big[1,x,x^2,\dots,x^n\big]$.
Equation \eqref{type1ode} shows that $\big\{X^{(\alpha)}_{j,m,n}\big\}_{j=1}^n$ is the zero set of $xL_{m,n}^{I,(\alpha)}(x)'+\alpha L_{m,n}^{I,(\alpha)}(x)$. Therefore, if
\begin{gather*}
\vec{a}=\begin{pmatrix}a_{0,n}\\ a_{1,n}\\ \vdots\\ a_{n,n}\end{pmatrix}\!,\qquad
\vec{b}_I=\begin{pmatrix}0\\ \vdots\\ 0\\ L_{m,n}^{I,(\alpha)}(x)\end{pmatrix}\!,
\end{gather*}
then $\tilde{M}_n^I\vec{a}=\vec{b}_I$. It is clear that the determinant of $\tilde{M}_n^I$ is a polynomial of degree (at most)~$n$ and the relation $\tilde{M}_n^I\vec{a}=\vec{b}_I$ tells us that if we plug in any zero of $L_{m,n}^{I,(\alpha)}(x)$ for $x$, then $\vec{a}\in\operatorname{Ker}\big(\tilde{M}_n^I\big)$ and so $\det\big(\tilde{M}_n^I\big)=0$ at that point. We deduce that $\det\big(\tilde{M}_n^I\big)=C'L_{m,n}^{I,(\alpha)}(x)$ for some $C'\in\mathbb{C}$. It~is clear that $M_n^I$ is obtained from $\tilde{M}_n^I$ by elementary column operations. Therefore, $\det\big(M_n^I\big)=CL_{m,n}^{I,(\alpha)}(x)$ for some $C\in\mathbb{C}$.

To determine this constant $C$, notice that the coefficient of $x^n$ in $\det(M_n^I)$ is
\begin{gather*}
\det\begin{pmatrix}
1 & X^{(\alpha)}_{1,m,n} & \big(X^{(\alpha)}_{1,m,n}\big)^2 & \cdots & \big(X^{(\alpha)}_{1,m,n}\big)^{n-1}\\
1 & X^{(\alpha)}_{2,m,n} & \big(X^{(\alpha)}_{2,m,n}\big)^2 & \cdots & \big(X^{(\alpha)}_{2,m,n}\big)^{n-1}\\
\vdots & \vdots &\vdots & \ddots & \vdots\\
1 & X^{(\alpha)}_{n,m,n} & \big(X^{(\alpha)}_{n,m,n}\big)^2 & \cdots & \big(X^{(\alpha)}_{n,m,n}\big)^{n-1}
\end{pmatrix}\!\cdot\frac{1}{\alpha\!+\!n}
=\frac{1}{\alpha\!+\!n}\!\prod_{1\leq i<j\leq n}\!\!\big(X^{(\alpha)}_{j,m,n}-X^{(\alpha)}_{i,m,n}\big).
\end{gather*}
Observe that the points $X_{j,m,n}^{(\alpha)}$ are distinct by part $(i)$ of Theorem~\ref{clagzero}. The desired formula now follows from \eqref{lagform} and \eqref{xlag1form}.
\end{proof}

For our next result, define the polynomials $\{E_n(x)\}_{n=0}^{\infty}$ by
\begin{gather*}
E_n(x)=\sum_{j=0}^n\frac{x^j}{j!}.
\end{gather*}
These are the partial sums of the Taylor series for ${\rm e}^x$ around zero and it is also true that $E_n(x)=(-1)^nL_n^{(-n-1)}(x)$ (see \cite[Section 1.2]{KM01}).

\begin{Theorem}\label{lag2detform}
For any $\tau\in\mathbb{C}$, let $\big\{x_{j,N}^{(\tau)}\big\}_{j=1}^N$ denote the zeros of $L_N^{(\tau)}$. Fix $m\in\mathbb{N}$ and $\alpha\in(m-1,\infty)$ and let
\begin{gather*}
\big\{Y^{(\alpha)}_{j,m,n}\big\}_{j=1}^n=\big\{x_{j,m}^{(-\alpha-1)}\big\}_{j=1}^m \cup\big\{x_{j-m,n-m}^{(\alpha+1)}\big\}_{j=m+1}^{n}.
\end{gather*}
Let $M_n^{II}$ be the $(n+1)\times(n+1)$ matrix whose $j^{th}$ row is
\begin{gather*}
\big[1,Y_{j,m,n}^{(\alpha)},\big(Y_{j,m,n}^{(\alpha)}\big)^2,\dots,\big(Y_{j,m,n}^{(\alpha)}\big)^{n}\big]
\end{gather*}
for $j=1,\dots,n$ and let the last row of $M_n^{II}$ be
\begin{gather*}
[E_0(x),E_1(x),2!E_2(x),3!E_3(x),\dots,n!E_n(x)].
\end{gather*}
Then for $n\geq m$,
\begin{gather*}
L_{m,n}^{II,(\alpha)}(x)=\frac{(-1)^{n+1}-(m-\alpha-1)/(n-m)}{m!(n-m-1)!\prod_{1\leq i<j\leq n}\big(Y^{(\alpha)}_{j,m,n}-Y^{(\alpha)}_{i,m,n}\big)}\det\big(M_n^{II}\big).
\end{gather*}
\end{Theorem}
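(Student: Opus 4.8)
Here is the proof plan I would follow.

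The plan is to follow the proof of Theorem~\ref{lag1detform}, using the first-order relation \eqref{type2ode} in place of \eqref{type1ode}. Write $L_{m,n}^{II,(\alpha)}(x)=\sum_{j=0}^{n}a_{j,n}x^{j}$, let $\vec a$ be the column vector with entries $a_{0,n},\dots,a_{n,n}$, and introduce the $(n+1)\times(n+1)$ matrix $\tilde{M}_n^{II}$ whose $j$-th row for $j=1,\dots,n$ has $k$-th entry $k\big(Y^{(\alpha)}_{j,m,n}\big)^{k-1}-\big(Y^{(\alpha)}_{j,m,n}\big)^{k}$ for $k=0,1,\dots,n$ (the $k=0$ entry being $-1$) and whose last row is $\big[1,x,x^{2},\dots,x^{n}\big]$. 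The $j$-th row, applied to $\vec a$, returns the value at $Y^{(\alpha)}_{j,m,n}$ of $\frac{\rm d}{{\rm d}x}L_{m,n}^{II,(\alpha)}(x)-L_{m,n}^{II,(\alpha)}(x)$, which by \eqref{type2ode} equals $(\alpha+n+1-2m)L_{m}^{(-\alpha-1)}(x)L_{n-m}^{(\alpha+1)}(x)$; since $\alpha>m-1$ and $n\ge m$ the factor $\alpha+n+1-2m$ is positive, and by parts $(i)$, $(ii)$, $(iii)$ of Theorem~\ref{clagzero} the zeros of $L_{m}^{(-\alpha-1)}$ are simple and outside $(0,\infty)$ while those of $L_{n-m}^{(\alpha+1)}$ are simple and inside $(0,\infty)$, so the $n$ numbers $Y^{(\alpha)}_{j,m,n}$ are distinct and are exactly the zeros of that polynomial. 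Hence $\tilde{M}_n^{II}\vec a$ has first $n$ entries $0$ and last entry $L_{m,n}^{II,(\alpha)}(x)$, and exactly as in the proof of Theorem~\ref{lag1detform} (since $\det\tilde{M}_n^{II}$ is a polynomial of degree at most $n$ vanishing at every zero of $L_{m,n}^{II,(\alpha)}$) we get $\det\tilde{M}_n^{II}=C'L_{m,n}^{II,(\alpha)}(x)$ for some $C'\in\mathbb{C}$.

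The second step is to relate $M_n^{II}$ to $\tilde{M}_n^{II}$ by column operations. The key identity is $\sum_{i=0}^{k}\frac{k!}{i!}\big(iY^{i-1}-Y^{i}\big)=-Y^{k}$, valid for every $k\ge0$ by a one-line telescoping computation, together with $\sum_{i=0}^{k}\frac{k!}{i!}x^{i}=k!\,E_{k}(x)$. Consequently, replacing column $k$ of $\tilde{M}_n^{II}$ by $\sum_{i=0}^{k}\frac{k!}{i!}(\text{column }i)$ for each $k$ — equivalently, multiplying $\tilde{M}_n^{II}$ on the right by the unipotent upper-triangular matrix with $(i,k)$ entry $k!/i!$ for $i\le k$, which has determinant $1$ — produces the matrix whose first $n$ rows are $-\big[1,Y^{(\alpha)}_{j,m,n},\dots,\big(Y^{(\alpha)}_{j,m,n}\big)^{n}\big]$ and whose last row is $\big[E_{0}(x),E_{1}(x),2!E_{2}(x),\dots,n!E_{n}(x)\big]$; that matrix is $\operatorname{diag}(-1,\dots,-1,1)\,M_n^{II}$. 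Taking determinants gives $\det\tilde{M}_n^{II}=(-1)^{n}\det M_n^{II}$, so $\det M_n^{II}=(-1)^{n}C'L_{m,n}^{II,(\alpha)}(x)$.

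Finally I would determine the constant by comparing coefficients of $x^{n}$. Only the last row of $M_n^{II}$ depends on $x$ and $k!\,E_{k}(x)$ has leading term $x^{k}$, so cofactor expansion along the last row shows the coefficient of $x^{n}$ in $\det M_n^{II}$ is $\prod_{1\le i<j\le n}\big(Y^{(\alpha)}_{j,m,n}-Y^{(\alpha)}_{i,m,n}\big)$, which is nonzero by the distinctness established above. On the other hand, \eqref{xlag2form} and the fact that $L_{N}^{(\tau)}$ has leading coefficient $(-1)^{N}/N!$ give
\[
a_{n,n}=\frac{(-1)^{n-1}}{m!\,(n-m-1)!}+\frac{(m-\alpha-1)(-1)^{n}}{m!\,(n-m)!}=\frac{(-1)^{n+1}}{m!\,(n-m-1)!}\Big(1-\frac{m-\alpha-1}{n-m}\Big).
\]
Equating the two expressions for the coefficient of $x^{n}$ in $(-1)^{n}C'L_{m,n}^{II,(\alpha)}(x)$ pins down $C'$ and yields $L_{m,n}^{II,(\alpha)}(x)=\frac{a_{n,n}}{\prod_{1\le i<j\le n}(Y^{(\alpha)}_{j,m,n}-Y^{(\alpha)}_{i,m,n})}\det\big(M_n^{II}\big)$ with $a_{n,n}$ as above, which is the asserted formula.

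I expect the main obstacle — beyond the bookkeeping of signs — to be this column-operation step: in Theorem~\ref{lag1detform} the matrices $\tilde{M}_n^{I}$ and $M_n^{I}$ differed only by a diagonal column rescaling, whereas here one must recognize $M_n^{II}$, after negating its first $n$ rows, as the image of $\tilde{M}_n^{II}$ under a genuinely triangular unipotent column operation. The presence of the entry $E_{0}(x)=1$ in the last row of $M_n^{II}$ is exactly what obstructs a plain rescaling, and the extra factor $(-1)^{n}$ produced by the row sign must be carried through correctly to obtain the final constant.
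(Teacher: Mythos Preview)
Your approach is essentially the paper's own: set up $\tilde{M}_n^{II}$ so that its first $n$ rows encode the vanishing of $\big(L_{m,n}^{II,(\alpha)}\big)'-L_{m,n}^{II,(\alpha)}$ at the points $Y_{j,m,n}^{(\alpha)}$ via \eqref{type2ode}, argue as in Theorem~\ref{lag1detform} that $\det\tilde{M}_n^{II}$ is a scalar multiple of $L_{m,n}^{II,(\alpha)}$, pass to $M_n^{II}$ by column operations, and fix the constant by comparing leading coefficients using \eqref{lagform}, \eqref{xlag2form}, and Theorem~\ref{clagzero}. The paper's $\tilde{M}_n^{II}$ differs from yours only by a global sign on the first $n$ rows (it records $L-L'$ rather than $L'-L$), and the paper simply asserts that $M_n^{II}$ is reached ``by elementary column operations,'' whereas you actually exhibit the unipotent upper-triangular change of basis $\sum_{i\le k}\frac{k!}{i!}\,(\text{col }i)$ and the telescoping identity $\sum_{i=0}^{k}\frac{k!}{i!}\big(iY^{i-1}-Y^i\big)=-Y^k$; this is a genuine addition of detail over the paper, not a different method.

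One caveat on the very last line: your computed leading coefficient
\[
a_{n,n}=\frac{(-1)^{n+1}}{m!\,(n-m-1)!}\Big(1-\frac{m-\alpha-1}{n-m}\Big)
\]
is correct from \eqref{xlag2form}, but it is not literally equal to the constant $\dfrac{(-1)^{n+1}-(m-\alpha-1)/(n-m)}{m!\,(n-m-1)!}$ printed in the statement unless $n$ is odd. The paper's proof also stops at ``the desired formula now follows'' without writing $a_{n,n}$ out, so this discrepancy sits in the statement rather than in your argument; just be aware that your final sentence ``which is the asserted formula'' glosses over a sign in the second summand.
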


\begin{proof}Write
\begin{gather*}
L_{m,n}^{II,(\alpha)}(x)=\sum_{j=0}^nc_{j,n}x^j.
\end{gather*}
Let $\tilde{M}_n^{II}$ be the $(n+1)\times(n+1)$ matrix whose $j^{th}$ row is
\begin{gather*}
\big[1,(Y_{j,m,n}^{(\alpha)}-1),\big(Y_{j,m,n}^{(\alpha)}\big) (Y_{j,m,n}^{(\alpha)}-2),\dots,\big(Y_{j,m,n}^{(\alpha)}\big)^{n-1}\big(Y_{j,m,n}^{(\alpha)}-n\big)\big]
\end{gather*}
for $j=1,\dots,n$ and let the last row of $\tilde{M}_n^{II}$ be $\big[1,x,x^2,\dots,x^n\big]$. Equation \eqref{type2ode} implies $\big\{Y^{(\alpha)}_{j,m,n}\big\}_{j=1}^n$ is the zero set of $L_{m,n}^{II,(\alpha)}(x)'- L_{m,n}^{II,(\alpha)}(x)$.
Therefore, if
\begin{gather*}
\vec{c}=\begin{pmatrix}c_{0,n}\\ c_{1,n}\\ \vdots\\ c_{n,n}\end{pmatrix}\!,\qquad
\vec{b}_{II}=\begin{pmatrix}0\\ \vdots\\ 0\\ L_{m,n}^{II,(\alpha)}(x)\end{pmatrix}\!,
\end{gather*}
then $\tilde{M}_n^{II}\vec{c}=\vec{b}_{II}$. Reasoning as in the proof of Theorem \ref{lag1detform}, we deduce that $\det\big(M_n^{II}\big)=CL_{m,n}^{II,(\alpha)}(x)$ for some $C\in\mathbb{C}$.

To determine this constant $C$, notice that the coefficient of $x^n$ in $\det\big(M_n^{II}\big)$ is
\begin{align*}
&\det\begin{pmatrix}
1 & Y^{(\alpha)}_{1,m,n} & \big(Y^{(\alpha)}_{1,m,n}\big)^2 & \cdots & \big(Y^{(\alpha)}_{1,m,n}\big)^{n-1}\\[1ex]
1 & Y^{(\alpha)}_{2,m,n} & \big(Y^{(\alpha)}_{2,m,n}\big)^2 & \cdots & \big(Y^{(\alpha)}_{2,m,n}\big)^{n-1}\\
\vdots & \vdots &\vdots & \ddots & \vdots\\
1 & Y^{(\alpha)}_{n,m,n} & \big(Y^{(\alpha)}_{n,m,n}\big)^2 & \cdots & \big(Y^{(\alpha)}_{n,m,n}\big)^{n-1}
\end{pmatrix}=\prod_{1\leq i<j\leq n}\big(Y^{(\alpha)}_{j,m,n}-Y^{(\alpha)}_{i,m,n}\big).
\end{align*}
The desired formula now follows from \eqref{lagform}, from \eqref{xlag2form}, and from Theorem \ref{clagzero}, which guarantees that the set $\big\{Y_{j,m,n}^{(\alpha)}\big\}_{j=1}^n$ consists of $n$ distinct points.
\end{proof}

Our next result is a comparable determinantal formula for the type-III exceptional Laguerre polynomials. The formula that we will derive is slightly less elegant than those derived in Theorems \ref{lag1detform} and \ref{lag2detform} because we will only have information about zeros of the derivative of~$L_{m,n}^{III,(\alpha)}$ and hence will have to account for $L_{m,n}^{III,(\alpha)}(0)$ separately.

\begin{Theorem}\label{lag3detform}
For any $\tau\in\mathbb{C}$, let $\big\{x_{j,N}^{(\tau)}\big\}_{j=1}^N$ denote the zeros of $L_N^{(\tau)}$. Fix $m\in\mathbb{N}$ and $\alpha\in(-1,0)$ and let
\begin{gather*}
\big\{Z^{(\alpha)}_{j,m,n}\big\}_{j=1}^{n-1}=\big\{{-}x_{j,m}^{(-\alpha-1)}\big\}_{j=1}^m \cup\big\{x_{j-m,n-m-1}^{(\alpha+1)}\big\}_{j=m+1}^{n-1}.
\end{gather*}
Let $M_n^{III}$ be the $n\times n$ matrix whose $j^{th}$ row is
\begin{gather*}
\big[1,\big(Z_{j,m,n}^{(\alpha)}\big),\big(Z_{j,m,n}^{(\alpha)}\big)^2, \dots,\big(Z_{j,m,n}^{(\alpha)}\big)^{n-1}\big]
\end{gather*}
for $j=1,\dots,n-1$ and let the last row of $M_n^{III}$ be
\begin{gather*}
\bigg[1,\frac{x}{2},\dots,\frac{x^{n-1}}{n}\bigg].
\end{gather*}
Then for $n>m$,
\begin{align*}
L_{m,n}^{III,(\alpha)}(x)={}&\frac{x(-1)^{n-m-1}n\det\big(M_n^{III}\big)}{m!(n-m-1)!\prod_{1\leq i<j\leq n-1}\big(Z^{(\alpha)}_{j,m,n}-Z^{(\alpha)}_{i,m,n}\big)}
\\
&+(m+1)\binom{n-m+\alpha}{n-m-1}\binom{m-\alpha-1}{m+1}.
\end{align*}
\end{Theorem}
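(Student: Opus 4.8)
The plan is to follow the same template used in the proofs of Theorems~\ref{lag1detform} and~\ref{lag2detform}, but to apply it to the derivative of $L_{m,n}^{III,(\alpha)}$ rather than to the polynomial itself. First I would differentiate the integral formula~\eqref{lag3llms} to obtain
\begin{gather*}
\frac{\rm d}{{\rm d}x}L_{m,n}^{III,(\alpha)}(x)=nL_{n-m-1}^{(\alpha+1)}(x)L_m^{(-\alpha-1)}(-x),
\end{gather*}
which is the first order differential relation that plays the role of~\eqref{type1ode} and~\eqref{type2ode} in this case. Writing $L_{m,n}^{III,(\alpha)}(x)=\sum_{j=0}^n d_{j,n}x^j$, the key observation is that the $n-1$ numbers $Z^{(\alpha)}_{j,m,n}$ are exactly the zeros of $\big(L_{m,n}^{III,(\alpha)}\big)'$, since $L_{n-m-1}^{(\alpha+1)}(x)$ vanishes at $\big\{x_{j,n-m-1}^{(\alpha+1)}\big\}_{j=1}^{n-m-1}$ and $L_m^{(-\alpha-1)}(-x)$ vanishes at $\big\{-x_{j,m}^{(-\alpha-1)}\big\}_{j=1}^m$.

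Next I would set up the linear-algebra argument. Let $\tilde M_n^{III}$ be the $n\times n$ matrix whose $j$th row ($j=1,\dots,n-1$) is $\big[0,1,2Z^{(\alpha)}_{j,m,n},3\big(Z^{(\alpha)}_{j,m,n}\big)^2,\dots,(n-1)\big(Z^{(\alpha)}_{j,m,n}\big)^{n-2}\big]$, encoding evaluation of the derivative $\sum_{j\geq1}jd_{j,n}x^{j-1}$, and whose last row is $\big[1,x,x^2,\dots,x^{n-1}\big]$. Ignoring the top coefficient $d_{n,n}$ (the derivative relation only constrains the $n-1$ lower coefficients through the $n-1$ zero conditions), the vector $(d_{0,n},\dots,d_{n-1,n})^{T}$ together with $-L_{m,n}^{III,(\alpha)}(x)$ in the last slot lies in the kernel of $\tilde M_n^{III}$ whenever $x$ is a zero of $L_{m,n}^{III,(\alpha)}$; hence $\det\big(\tilde M_n^{III}\big)$ is a degree-$(\leq n-1)$ polynomial proportional to something vanishing at those zeros. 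The subtlety is that $L_{m,n}^{III,(\alpha)}$ has degree $n$, so $\det\big(\tilde M_n^{III}\big)$ can only be proportional to $L_{m,n}^{III,(\alpha)}(x)-L_{m,n}^{III,(\alpha)}(0)$, or equivalently to $xL_{n-m-1}^{(\alpha+1)}$-type antiderivative data; carrying out column operations (multiplying column~$k+1$ by $1/(k+1)$ and rescaling) converts $\tilde M_n^{III}$ into $M_n^{III}$, so $\det\big(M_n^{III}\big)=C\big(L_{m,n}^{III,(\alpha)}(x)-L_{m,n}^{III,(\alpha)}(0)\big)/x$ for some constant $C$, which after clearing denominators gives the stated shape with the additive constant $L_{m,n}^{III,(\alpha)}(0)$. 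From~\eqref{lag3llms} with $x=0$ one reads off $L_{m,n}^{III,(\alpha)}(0)=(m+1)\binom{n-m+\alpha}{n-m-1}\binom{m-\alpha-1}{m+1}$, which is precisely the additive term appearing in the theorem.

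Finally I would pin down the constant $C$ by comparing leading coefficients, exactly as in the earlier proofs: the coefficient of $x^{n-1}$ in $\det\big(M_n^{III}\big)$ is $\tfrac1n$ times the Vandermonde determinant $\prod_{1\leq i<j\leq n-1}\big(Z^{(\alpha)}_{j,m,n}-Z^{(\alpha)}_{i,m,n}\big)$, while the coefficient of $x^{n-1}$ in $\big(L_{m,n}^{III,(\alpha)}(x)-L_{m,n}^{III,(\alpha)}(0)\big)/x$ is the coefficient of $x^n$ in $L_{m,n}^{III,(\alpha)}$, which one computes from~\eqref{xlag3form} using the known leading coefficients of the classical Laguerre polynomials (leading coefficient of $L_k^{(\tau)}$ equal to $(-1)^k/k!$). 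Matching these and solving for $C$ produces the prefactor $x(-1)^{n-m-1}n/\big(m!(n-m-1)!\,\prod(Z_{j}-Z_{i})\big)$. The one point requiring care — and the main obstacle — is justifying that the points $Z^{(\alpha)}_{j,m,n}$ are genuinely distinct, so that the Vandermonde factor is nonzero and the kernel argument forces the full claimed proportionality: this needs that the zeros of $L_m^{(-\alpha-1)}$ and of $L_{n-m-1}^{(\alpha+1)}$ are each simple (parts~$(i)$ and~$(ii)$ of Theorem~\ref{clagzero}, valid since $\alpha\in(-1,0)$ gives $-\alpha-1\in(-1,0)$ with $m>(-\alpha-1)\cdot(-1)$... in fact $\alpha+1-m<1$ is automatic, and the relevant hypothesis $\alpha>m-1$ of part~$(ii)$ applied with parameter $-\alpha-1$ translates correctly here) and that no zero of $L_m^{(-\alpha-1)}(-x)$, i.e.\ no point $-x_{j,m}^{(-\alpha-1)}$, coincides with a positive zero $x_{j,n-m-1}^{(\alpha+1)}$, which holds because the former are nonpositive by part~$(iii)$ of Theorem~\ref{clagzero} and the latter are strictly positive by part~$(i)$.
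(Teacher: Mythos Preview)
Your overall strategy matches the paper's proof exactly: differentiate \eqref{lag3llms} to identify the $Z^{(\alpha)}_{j,m,n}$ as the zeros of $\big(L_{m,n}^{III,(\alpha)}\big)'$, set up an auxiliary matrix $\tilde M_n^{III}$ whose kernel detects the coefficient vector, pass to $M_n^{III}$ by column scalings, then fix the constant by matching the $x^{n-1}$ coefficient against the Vandermonde and read off $L_{m,n}^{III,(\alpha)}(0)$ from \eqref{lag3llms}.

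However, your construction of $\tilde M_n^{III}$ is set up on the wrong end. You discard the top coefficient $d_{n,n}$ and act on $(d_{0,n},\dots,d_{n-1,n})^T$ with rows $[0,1,2Z,\dots,(n-1)Z^{n-2}]$; but then the row--vector product is $\sum_{j=1}^{n-1} j d_{j,n} Z^{j-1}$, which equals $-n d_{n,n}Z^{n-1}$ rather than $0$, so the kernel argument fails. Likewise your last row applied to $(d_{0,n},\dots,d_{n-1,n})^T$ produces $L_{m,n}^{III,(\alpha)}(x)-d_{n,n}x^n$, not $L_{m,n}^{III,(\alpha)}(x)$, and no column operation turns a first column of the form $(0,\dots,0,1)^T$ into the all-ones first column of $M_n^{III}$. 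The paper instead discards the \emph{constant} term: it sets $\vec h=(h_{1,n},\dots,h_{n,n})^T$, takes the $j$th row of $\tilde M_n^{III}$ to be $[1,2Z,3Z^2,\dots,nZ^{n-1}]$ (so that $\tilde M_n^{III}\vec h$ has zeros in its first $n-1$ entries), and takes the last row $[1,x,\dots,x^{n-1}]$ so that the final entry is $P_n(x)=\big(L_{m,n}^{III,(\alpha)}(x)-h_{0,n}\big)/x$. Dividing column $k$ by $k$ then yields $M_n^{III}$ exactly, and the rest of your argument (Vandermonde cofactor, leading coefficient from \eqref{lagform} and \eqref{xlag3form}, value at $0$ from \eqref{lag3llms}) goes through unchanged.

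One smaller point: your distinctness check is more tangled than necessary. Since $\alpha\in(-1,0)$ forces both $\alpha+1>-1$ and $-\alpha-1>-1$, part~$(i)$ of Theorem~\ref{clagzero} alone gives that the zeros of $L_{n-m-1}^{(\alpha+1)}$ and of $L_m^{(-\alpha-1)}$ are simple and lie in $(0,\infty)$; hence the $-x_{j,m}^{(-\alpha-1)}$ are strictly negative and automatically disjoint from the strictly positive $x_{j,n-m-1}^{(\alpha+1)}$. Parts~$(ii)$ and~$(iii)$ are not needed here.
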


\begin{proof}Write
\begin{gather*}
L_{m,n}^{III,(\alpha)}(x)=\sum_{j=0}^nh_{j,n}x^j.
\end{gather*}
Let $\tilde{M}_n^{III}$ be the $n\times n$ matrix whose $j^{th}$ row is
\begin{gather*}
\big[1,2Z_{j,m,n}^{(\alpha)},3\big(Z_{j,m,n}^{(\alpha)}\big)^2,\dots,n
\big(Z_{j,m,n}^{(\alpha)}\big)^{n-1}\big]
\end{gather*}
for $j=1,\dots,n-1$ and let the last row of $\tilde{M}_n^{III}$ be $\big[1,x,x^2,\dots,x^{n-1}\big]$. Equation~\eqref{lag3llms} implies $\big\{Z^{(\alpha)}_{j,m,n}\big\}_{j=1}^{n-1}$ is the zero set of $L_{m,n}^{III,(\alpha)}(x)'$.
Therefore, if $P_n(x)=\big(L_{m,n}^{III,(\alpha)}(x)-h_{0,n}\big)/x$ and
\begin{gather*}
\vec{h}=\begin{pmatrix}h_{1,n}\\ h_{2,n}\\ \vdots\\ h_{n,n}\end{pmatrix}\!,\qquad
\vec{b}_{III}=\begin{pmatrix}0\\ \vdots\\ 0\\ P_n(x)\end{pmatrix}\!,
\end{gather*}
then $\tilde{M}_n^{III}\vec{h}=\vec{b}_{III}$. Reasoning as in the proof of Theorem \ref{lag1detform}, we deduce that $\det\big(M_n^{III}\big)=CP_n(x)$ for some $C\in\mathbb{C}$.

To determine this constant $C$, notice that the coefficient of $x^{n-1}$ in $\det\big(M_n^{III}\big)$ is
\begin{gather*}
\frac{1}{n} \det\begin{pmatrix}
1 & Z^{(\alpha)}_{1,m,n} & \big(Z^{(\alpha)}_{1,m,n}\big)^2 & \cdots & \big(Z^{(\alpha)}_{1,m,n}\big)^{n-2}\\
1 & Z^{(\alpha)}_{2,m,n} & \big(Z^{(\alpha)}_{2,m,n}\big)^2 & \cdots & \big(Z^{(\alpha)}_{2,m,n}\big)^{n-2}\\
\vdots & \vdots &\vdots & \ddots & \vdots\\
1 & Z^{(\alpha)}_{n-1,m,n} & \big(Z^{(\alpha)}_{n-1,m,n}\big)^2 & \cdots & \big(Z^{(\alpha)}_{n-1,m,n}\big)^{n-2}
\end{pmatrix}\!
 =\frac{1}{n}\!\prod_{1\leq i<j\leq n-1}\!\!\!\big(Z^{(\alpha)}_{j,m,n}\!-Z^{(\alpha)}_{i,m,n}\big).
\end{gather*}
The desired formula now follows from \eqref{lagform}, from \eqref{xlag3form}, from Theorem \ref{clagzero} (which guarantees that the set $\big\{Z_{j,m,n}^{(\alpha)}\big\}_{j=1}^{n-1}$ consists of ${n-1}$ distinct points), and the formula for $L_{m,n}^{III,(\alpha)}(0)$ from~\eqref{lag3llms}.
\end{proof}

\subsection[X\_m Jacobi polynomials]{$\boldsymbol{X_m}$ Jacobi polynomials}

In this section, we will present results for $X_m$ Jacobi polynomials that are analogous to those in Section \ref{xmlag}.

\begin{Theorem}\label{jacint}
Fix $m\in\mathbb{N}$ and let $P_{m,n}^{(\alpha,\beta)}(x)$ denote the degree $n$ $X_m$ Jacobi polynomial with $\beta>0$, $\alpha>m-1$, and $\alpha+1-m-\beta\not\in\{0,1,\dots,m-1\}$. Then
\begin{gather*}
P_{m,n}^{(\alpha,\beta)}(x)=(-1)^{m}\frac{(\beta+n)(\alpha+n-2m+1)}{(\alpha+n-m+1)(x+1)^{\beta}} \int_{-1}^x(t+1)^{\beta-1}P_m^{(-\alpha-1,\beta-1)}(t)P_{n-m}^{(\alpha+1,\beta-1)}(t)\,{\rm d}t
\end{gather*}
for all $n\geq m$.
\end{Theorem}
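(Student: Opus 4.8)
The plan is to follow the proof of Theorem~\ref{lagints}: first establish a first order differential relation for $P_{m,n}^{(\alpha,\beta)}$, then recover the integral formula by solving the associated linear ODE with an integrating factor. Applying the product rule to $(x+1)^{\beta}P_{m,n}^{(\alpha,\beta)}(x)$ shows that the claimed identity is equivalent to the relation
\begin{gather*}
(x+1)\frac{\rm d}{{\rm d}x}P_{m,n}^{(\alpha,\beta)}(x)+\beta P_{m,n}^{(\alpha,\beta)}(x)=(-1)^{m}\frac{(\beta+n)(\alpha+n-2m+1)}{\alpha+n-m+1}P_m^{(-\alpha-1,\beta-1)}(x)P_{n-m}^{(\alpha+1,\beta-1)}(x),
\end{gather*}
so the entire proof reduces to verifying this single differential relation.

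As in the Laguerre case, one first shows, either by appealing to the general structure of \cite{GMM} or by differentiating \eqref{xjacform} directly, that there is a polynomial $Q_{n-m}(x)$ of degree $n-m$ with
\begin{gather*}
(x+1)\frac{\rm d}{{\rm d}x}P_{m,n}^{(\alpha,\beta)}(x)+\beta P_{m,n}^{(\alpha,\beta)}(x)=Q_{n-m}(x)P_m^{(-\alpha-1,\beta-1)}(x).
\end{gather*}
The main computational task is then to identify $Q_{n-m}$ as the stated constant multiple of $P_{n-m}^{(\alpha+1,\beta-1)}$. I would carry this out exactly as in the proof of Theorem~\ref{lagints}$(i)$: differentiate \eqref{xjacform} term by term using the derivative rule $\frac{\rm d}{{\rm d}x}P_k^{(a,b)}(x)=\frac{1}{2}(a+b+k+1)P_{k-1}^{(a+1,b+1)}(x)$ together with the companion identity $(x+1)\frac{\rm d}{{\rm d}x}P_k^{(a,b)}(x)+bP_k^{(a,b)}(x)=(k+b)P_k^{(a+1,b-1)}(x)$ (both found in \cite{dlmf}), and then repeatedly apply the standard three-term and contiguous relations for Jacobi polynomials to collapse every term onto a common factor $P_m^{(-\alpha-1,\beta-1)}(x)$ times a single polynomial in the $(\alpha+1,\beta-1)$ family. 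Comparing leading coefficients, together with the value at $x=1$ via $P_k^{(a,b)}(1)=\binom{a+k}{k}$, then pins down the constant $(-1)^m(\beta+n)(\alpha+n-2m+1)/(\alpha+n-m+1)$.

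Once the differential relation is in hand, the remainder is routine. Multiplying both sides by the integrating factor $(x+1)^{\beta-1}$, the left-hand side becomes $\frac{\rm d}{{\rm d}x}\big[(x+1)^{\beta}P_{m,n}^{(\alpha,\beta)}(x)\big]$; integrating from a constant $c$ to $x$ and noting that $(x+1)^{\beta}P_{m,n}^{(\alpha,\beta)}(x)\to0$ as $x\to-1$ (which uses $\beta>0$, and also guarantees convergence of the integral near $-1$), we may take $c=-1$, and the stated formula follows.

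The main obstacle is the explicit determination of $Q_{n-m}$. Unlike the Laguerre case, the excerpt has not collected the relevant Jacobi identities, so one must carefully track the four parameter shifts $(\alpha,\beta)\mapsto(-\alpha-1,\beta-1),(-\alpha-2,\beta),(\alpha+2,\beta),(\alpha+1,\beta-1)$ appearing in \eqref{xjacform} through the contiguous relations, and the $\Gamma$-factor normalizations in \eqref{jacform} make the constant bookkeeping delicate (the parenthetical correction of a typo in \cite{GMM} noted earlier in the excerpt is a reminder that such constants must be handled with care). A convenient consistency check is the degenerate case $n=m$, where $P_{n-m}^{(\alpha+1,\beta-1)}\equiv1$ and $P_{m,m}^{(\alpha,\beta)}$ reduces to a constant multiple of $P_m^{(-\alpha-2,\beta)}$, so the differential relation can be verified directly from the companion identity above.
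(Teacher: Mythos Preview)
Your approach is correct and is essentially the same as the paper's: establish the first-order relation
\[
(1+x)\,P_{m,n}^{(\alpha,\beta)}{}'(x)+\beta\,P_{m,n}^{(\alpha,\beta)}(x)
=(-1)^m\frac{(\beta+n)(\alpha+n-2m+1)}{\alpha+n-m+1}\,P_m^{(-\alpha-1,\beta-1)}(x)\,P_{n-m}^{(\alpha+1,\beta-1)}(x),
\]
multiply by the integrating factor $(x+1)^{\beta-1}$, integrate, and take the lower limit $c=-1$ using $\beta>0$. The one substantive difference is that the paper does not re-derive this differential relation from \eqref{xjacform} at all: it simply quotes it as \cite[equation~(70)]{GMM} (this is the paper's \eqref{xjaccond}) and proceeds immediately to the integration. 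So the step you single out as ``the main obstacle''---identifying $Q_{n-m}$ via contiguous Jacobi relations and leading-coefficient matching---is work you do not actually need to do; you already allow for this when you write ``either by appealing to the general structure of \cite{GMM}'', and that is exactly the route the paper takes.
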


\begin{proof}
Recall \cite[equation~(70)]{GMM} states that
\begin{gather}
(1+x)P_{m,n}^{(\alpha,\beta)}(x)'+\beta P_{m,n}^{(\alpha,\beta)}(x)\nonumber
\\ \qquad
{}=(-1)^{m}\frac{(\beta+n)(\alpha+n-2m+1)}{(\alpha+n-m+1)}P_{n-m}^{(\alpha+1,\beta-1)}(x)P^{(-\alpha-1,\beta-1)}_m(x).
\label{xjaccond}
\end{gather}
Multiply both sides of \eqref{xjaccond} by $(x+1)^{\beta-1}$ and integrate to obtain
\begin{gather*}
(x+1)^{\beta}P_{m,n}^{(\alpha,\beta)}(x)
\\ \qquad
{}=(-1)^{m}\frac{(\beta+n)(\alpha+n-2m+1)}{(\alpha+n-m+1)}\int_{c}^x(t+1)^{\beta-1}P_m^{(-\alpha-1,\beta-1)}(t)P_{n-m}^{(\alpha+1,\beta-1)}(t)\,{\rm d}t
\end{gather*}
for a constant $c$ so that the left-hand side of this expression is equal to $0$ when evaluated at $c$. Clearly, $c=-1$ is a valid choice (because $\beta>0$) and the desired formula follows.
\end{proof}

For our next result, define the polynomials $\{R_n(x)\}_{n=0}^{\infty}$ by
\begin{gather*}
R_n(x;\beta)=\sum_{j=0}^n\frac{\beta^{(j)}(-x)^{j}}{j!}.
\end{gather*}
These polynomials are the partial sums of the Taylor series for $(1+x)^{-\beta}$ around zero and it is also true that $R_n(x,\beta)=(-1)^nP_{n}^{(-n-1,\beta)}(1+2x)$. In our notation, we used the Pochhammer symbol
\begin{gather*}
a^{(n)}=\begin{cases}
a(a+1)(a+2)\cdots(a+n-1)&\text{if} \ n>0,\\
1&\text{if} \ n=0.
\end{cases}
\end{gather*}

\begin{Theorem}\label{jacdetform}
Let $\big\{z_{j,N}^{(\alpha,\beta)}\big\}_{j=1}^N$ denote the zeros of $P_N^{(\alpha,\beta)}$. Choose $\alpha$, $\beta$, $m$ so that the hypotheses of Theorem~$\ref{jacint}$ are satisfied. Let
\begin{gather*}
\big\{W^{(\alpha,\beta)}_{j,m,n}\big\}_{j=1}^n=\big\{z_{j,m}^{(-\alpha-1,\beta-1)}\big\}_{j=1}^m \cup\big\{z_{j-m,n-m}^{(\alpha+1,\beta-1)}\big\}_{j=m+1}^{n}.
\end{gather*}
Let $M_n$ be the $(n+1)\times(n+1)$ matrix whose $j^{th}$ row is
\begin{gather*}
\big[1,W_{j,m,n}^{(\alpha,\beta)},\big(W_{j,m,n}^{(\alpha,\beta)}\big)^2,\dots, \big(W_{j,m,n}^{(\alpha,\beta)}\big)^{n}\big]
\end{gather*}
for $j=1,\dots,n$ and let the last row of $M_n$ be
\begin{gather*}
\bigg[\frac{1}{\beta^{(1)}},\frac{-1}{\beta^{(2)}}R_1(x),\frac{2!}{\beta^{(3)}}R_2(x), \dots,\frac{(-1)^nn!}{\beta^{(n+1)}}R_n(x)\bigg].
\end{gather*}
Then for $n\geq m$ $($with $\delta=\alpha+1)$,
\begin{gather*}
P_{m,n}^{(\alpha,\beta)}(x)=\frac{(-1)^m2^{-n}\binom{n}{m}(n\!+\!\delta\!-\!2m)\Gamma(\beta\!-\!\delta\!+\!2m) \Gamma(\delta\!+\!\beta\!+\!2(n\!-\!m))(n\!+\!\beta)} {n!(\delta+n-m)\Gamma(\beta\!-\!\delta\!+\!m)\Gamma(\delta\!+\!\beta\!+\!n\!-\!m) \prod_{i<j}\big(W^{(\alpha,\beta)}_{j,m,n} -W^{(\alpha,\beta)}_{i,m,n}\big)}\det(M_n).
\end{gather*}
\end{Theorem}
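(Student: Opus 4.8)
The plan is to mimic the structure of the proofs of Theorems \ref{lag1detform}, \ref{lag2detform}, and \ref{lag3detform}, using the first order differential relation \eqref{xjaccond} in place of \eqref{type1ode} and \eqref{type2ode}. Write $P_{m,n}^{(\alpha,\beta)}(x)=\sum_{j=0}^n p_{j,n}x^j$. The key observation is that the operator $y\mapsto (1+x)y'+\beta y$ sends $x^j$ to $(j+\beta)x^j + j x^{j-1}$, so that \eqref{xjaccond} lets us set up a linear system whose solution vector is $\vec{p}=(p_{0,n},\dots,p_{n,n})^{T}$ and whose right-hand side has all entries zero except the last, which is $P_{m,n}^{(\alpha,\beta)}(x)$ itself. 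Concretely, I would let $\tilde M_n$ be the $(n+1)\times(n+1)$ matrix whose $j$th row ($j=1,\dots,n$) records the value at $x=W_{j,m,n}^{(\alpha,\beta)}$ of the images of $1,x,x^2,\dots,x^n$ under the operator $y\mapsto(1+x)y'+\beta y$ — that is, the row $\big[\beta,\ \beta+W_{j,m,n}^{(\alpha,\beta)},\ W_{j,m,n}^{(\alpha,\beta)}(2+W_{j,m,n}^{(\alpha,\beta)}),\ \dots\big]$ with $k$th entry $(W_{j,m,n}^{(\alpha,\beta)})^{k-1}(k-1+\beta)+ (k-1)(W_{j,m,n}^{(\alpha,\beta)})^{k-2}$ — and whose last row is $[1,x,x^2,\dots,x^n]$. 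By \eqref{xjaccond}, each $W_{j,m,n}^{(\alpha,\beta)}$ is a zero of the left-hand side of that equation (being a zero of $P_m^{(-\alpha-1,\beta-1)}$ or of $P_{n-m}^{(\alpha+1,\beta-1)}$), so $\tilde M_n\vec{p}=\vec{b}$ with $\vec{b}=(0,\dots,0,P_{m,n}^{(\alpha,\beta)}(x))^{T}$.

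Next, exactly as before, $\det(\tilde M_n)$ is a polynomial in $x$ of degree at most $n$, and whenever $x$ is a zero of $P_{m,n}^{(\alpha,\beta)}$ the vector $\vec{p}$ lies in the kernel of $\tilde M_n$, forcing $\det(\tilde M_n)$ to vanish there; since $P_{m,n}^{(\alpha,\beta)}$ has degree $n$ (guaranteed by the hypotheses via \cite[equation~(6.72.3)]{SzegoBook}), we conclude $\det(\tilde M_n)=C' P_{m,n}^{(\alpha,\beta)}(x)$ for a constant $C'$. The passage from $\tilde M_n$ to $M_n$ is by elementary column operations: replacing each column by a suitable linear combination of the columns to its left cancels the off-diagonal contribution $(k-1)(W_{j,m,n}^{(\alpha,\beta)})^{k-2}$ in the first $n$ rows, at the cost of introducing the partial-sum polynomials $R_{k}(x)$ in the last row. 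This is where the identity $R_n(x;\beta)=(-1)^nP_n^{(-n-1,\beta)}(1+2x)$ and the fact that $\{R_n\}$ are the Taylor partial sums of $(1+x)^{-\beta}$ are used to identify the last row as stated; one should check that $(1+x)R_k'(x)+\beta R_k(x)$ telescopes appropriately so that the column operations that clean up rows $1,\dots,n$ produce precisely $\tfrac{(-1)^k k!}{\beta^{(k+1)}}R_k(x)$ in position $k+1$ of the final row. Thus $\det(M_n)=C\,P_{m,n}^{(\alpha,\beta)}(x)$ for some constant $C$, with $C\ne 0$ once we verify the $W_{j,m,n}^{(\alpha,\beta)}$ are distinct — which follows from Theorem \ref{cjaczero}: part~$(i)$ gives simplicity of the zeros of $P_{n-m}^{(\alpha+1,\beta-1)}$ in $(-1,1)$ (using $\alpha+1>0$, $\beta-1>-1$), part~$(ii)$ gives simplicity of the zeros of $P_m^{(-\alpha-1,\beta-1)}$, and part~$(iii)$ together with the hypotheses ensures $P_m^{(-\alpha-1,\beta-1)}$ has no zeros in $(-1,1)$, so the two zero sets are disjoint.

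Finally, to pin down the constant $C$, I would compare coefficients of $x^n$ on both sides. On the right, the coefficient of $x^n$ in $\det(M_n)$ comes only from the entry $\tfrac{(-1)^n n!}{\beta^{(n+1)}}R_n(x)$ in the last row (all other last-row entries having degree $<n$), whose leading term is $\tfrac{(-1)^n n!}{\beta^{(n+1)}}\cdot\tfrac{\beta^{(n)}(-1)^n}{n!}=\tfrac{1}{\beta+n}$; expanding the determinant along the last row, the cofactor of that entry is the $n\times n$ Vandermonde determinant $\prod_{1\le i<j\le n}\big(W^{(\alpha,\beta)}_{j,m,n}-W^{(\alpha,\beta)}_{i,m,n}\big)$. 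On the left, the coefficient of $x^n$ in $P_{m,n}^{(\alpha,\beta)}$ is read off from \eqref{xjacform}: it is the product of the leading coefficients of the two factors in the dominant term, which are given by \eqref{jacform} (the leading coefficient of $P_N^{(a,b)}$ being $2^{-N}\binom{2N+a+b}{N}$, equivalently $2^{-N}\Gamma(a+b+2N+1)/(N!\,\Gamma(a+b+N+1))$). Carrying out this bookkeeping — with $\delta=\alpha+1$, the $P_m^{(-\alpha-1,\beta-1)}$ factor contributing $\Gamma(\beta-\delta+2m)/(m!\,\Gamma(\beta-\delta+m))\cdot 2^{-m}$ and the $P_{n-m-1}^{(\alpha+2,\beta)}$ or $P_{n-m}^{(\alpha+1,\beta-1)}$ factor contributing the $\Gamma(\delta+\beta+2(n-m))/((n-m)!\,\Gamma(\delta+\beta+n-m))\cdot 2^{-(n-m)}$ piece, and tracking the prefactor $(-1)^m(n+\beta)(n+\delta-2m)/(n+\delta-m)$ from \eqref{xjacform}–\eqref{xjaccond} together with the factor $\tfrac{1}{\beta+n}$ from the $R_n$ leading term — yields the stated value of $C$ and hence the formula. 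The main obstacle I anticipate is purely bookkeeping: correctly executing the column operations so that the last row comes out exactly as the scaled $R_k(x)$'s (in particular getting the Pochhammer denominators $\beta^{(k+1)}$ right), and then matching the $\Gamma$-function prefactor against the leading coefficient extracted from \eqref{xjacform}; there is no conceptual difficulty beyond what already appears in Theorems \ref{lag1detform}–\ref{lag3detform}, but the constant is intricate and must be checked carefully, ideally against a small case such as $n=m$ or $m=1$.
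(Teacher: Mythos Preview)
Your proposal is correct and takes essentially the same approach as the paper: build $\tilde M_n$ from the operator $(1+x)y'+\beta y$, use \eqref{xjaccond} so that the first $n$ rows annihilate $\vec p$, pass to $M_n$ by column operations, and determine $C$ by matching the $x^n$-coefficient (Vandermonde times $1/(n+\beta)$) against the leading coefficient read off from \eqref{jacform}--\eqref{xjacform}, with distinctness of the $W_{j,m,n}^{(\alpha,\beta)}$ supplied by Theorem~\ref{cjaczero}. Note only a minor slip in your illustrative row entries---per your own general formula they should read $1+(1+\beta)W$ and $W\big(2+(2+\beta)W\big)$ rather than $\beta+W$ and $W(2+W)$---which does not affect the argument.
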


\begin{proof}
Write
\begin{gather*}
P_{m,n}^{(\alpha,\beta)}(x)=\sum_{j=0}^nd_{j,n}x^j.
\end{gather*}
Let $\tilde{M}_n$ be the $(n+1)\times(n+1)$ matrix whose $j^{th}$ row is
\begin{gather*}
\big[\beta,\big(1\!+(1\!+\beta)W_{j,m,n}^{(\alpha,\beta)}\big),\big(W_{j,m,n}^{(\alpha,\beta)}\big)\big(2\!+(2\!+\beta) W_{j,m,n}^{(\alpha,\beta)}\big),\dots,\big(W_{j,m,n}^{(\alpha,\beta)}\big)^{n-1} \big(n\!+(n\!+\beta)W_{j,m,n}^{(\alpha,\beta)}\big)\big]
\end{gather*}
{\sloppy for $j=1,\dots,n$ and let the last row of $\tilde{M}_n$ be $\big[1,x,x^2,\dots,x^n\big]$.
Equation \eqref{xjaccond} implies $\big\{W^{(\alpha)}_{j,m,n}\big\}_{j=1}^n$ is the zero set of $(1+x)P_{m,n}^{(\alpha,\beta)}(x)'+\beta P_{m,n}^{(\alpha,\beta)}(x)$. Therefore, if
\begin{gather*}
\vec{d}=\begin{pmatrix}d_{0,n}\\ d_{1,n}\\ \vdots\\ d_{n,n}\end{pmatrix}\!,\qquad
\vec{b}=\begin{pmatrix}0\\ \vdots\\ 0\\ P_{m,n}^{(\alpha,\beta)}(x)\end{pmatrix}\!,
\end{gather*}}\noindent
then $\tilde{M}_n\vec{d}=\vec{b}$. Reasoning as in the proof of Theorem \ref{lag1detform}, we deduce that $\det(M_n)=CP_{m,n}^{(\alpha,\beta)}(x)$ for some $C\in\mathbb{C}$.

To determine this constant $C$, notice that the coefficient of $x^n$ in $\det(M_n)$ is
\begin{gather*}
\frac{1}{n+\beta}\det\begin{pmatrix}
1 & W^{(\alpha,\beta)}_{1,m,n} & \big(W^{(\alpha,\beta)}_{1,m,n}\big)^2 & \cdots & \big(W^{(\alpha,\beta)}_{1,m,n}\big)^{n-1}\\
1 & W^{(\alpha,\beta)}_{2,m,n} & \big(W^{(\alpha,\beta)}_{2,m,n}\big)^2 & \cdots & \big(W^{(\alpha,\beta)}_{2,m,n}\big)^{n-1}\\
\vdots & \vdots &\vdots & \ddots & \vdots\\
1 & W^{(\alpha,\beta)}_{n,m,n} & \big(W^{(\alpha,\beta)}_{n,m,n}\big)^2 & \cdots & \big(W^{(\alpha,\beta)}_{n,m,n}\big)^{n-1}
\end{pmatrix}\!=\!\frac{1}{n\!+\!\beta}\!\prod_{1\leq i<j\leq n}\!\!\!\!\big(W^{(\alpha,\beta)}_{j,m,n}-W^{(\alpha,\beta)}_{i,m,n}\big).
\end{gather*}
The desired formula now follows from \eqref{jacform}, from \eqref{xjacform}, and from Theorem \ref{cjaczero}, which guarantees us that the set $\big\{W_{j,m,n}^{(\alpha,\beta)}\big\}_{j=1}^n$ consists of $n$ distinct points.
\end{proof}

\subsection[X\_2 Hermite polynomials]{$\boldsymbol{X_2}$ Hermite polynomials}

The starting point for our work with these polynomials comes from \cite[Proposition 5.4]{GGM14}, which states that if a polynomial $p$ is in the span of $\big\{H_{|\lambda|,n}^{(\lambda)}\big\}_{n\in\mathbb{N}_{\lambda}}$, then
\begin{gather*}
2H_{\lambda}'(xp-p')+H_{\lambda}''p
\end{gather*}
is divisible by $H_{\lambda}$ (note that the complete statement of \cite[Proposition 5.4]{GGM14} was corrected in \cite{D20,GGM14a}). Since each polynomial in the sequence $\big\{H_{|\lambda|,n}^{(\lambda)}\big\}_{n\in\mathbb{N}_{\lambda}}$ is obviously in the span of that sequence, it is interesting to find the polynomial $Q_{n,\lambda}$ such that
\begin{equation}\label{hermq}
2H_{\lambda}'(x)\big(xH_{|\lambda|,n}^{(\lambda)}(x)-H_{|\lambda|,n}^{(\lambda)}(x)'\big) +H_{\lambda}''(x)H_{|\lambda|,n}^{(\lambda)}(x)=Q_{n,\lambda}(x)H_{\lambda}(x).
\end{equation}
The search for such a polynomial leads us to the following result.

\begin{Theorem}\label{hermint}
Let $\lambda=\{1,1\}$ and let $H_{2,n}^{(\{1,1\})}$ denote the corresponding degree $n$ exceptional Hermite polynomial with $n\geq3$. Then
\begin{align*}
H_{2,n}^{(\{1,1\})}(x)&=8n(n-1)(n-2)\int_0^xH_{\{1,1\}}(t)H_{n-3}(t)\,{\rm d}t+16(n-1)(n-2)H_{n-2}(0)
\\
&=\int_0^xH_{\{1,1\}}(t)H'''_{n}(t)\,{\rm d}t+8(n-2)H_{n-1}'(0).
\end{align*}
\end{Theorem}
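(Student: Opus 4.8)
The plan is to exploit the fact that both polynomials involved are Wronskians of classical Hermite polynomials: by definition $H_{\{1,1\}}=\operatorname{Wr}[H_1,H_2]$ and $H_{2,n}^{(\{1,1\})}=\operatorname{Wr}[H_1,H_2,H_n]$. First I would establish the first order differential relation
\begin{gather*}
\frac{\rm d}{{\rm d}x}H_{2,n}^{(\{1,1\})}(x)=H_{\{1,1\}}(x)\,H_n'''(x)=8n(n-1)(n-2)\,H_{\{1,1\}}(x)\,H_{n-3}(x),
\end{gather*}
in which the last equality is three applications of \eqref{hermder}. Granting this relation, both displayed formulas will follow by integrating from $0$ to $x$ and then computing $H_{2,n}^{(\{1,1\})}(0)$ in the two equivalent ways appearing on the respective right-hand sides.

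To prove the differential relation, write $H_{2,n}^{(\{1,1\})}$ as the $3\times3$ determinant whose rows are $(H_1,H_2,H_n)$, $(H_1',H_2',H_n')$, $(H_1'',H_2'',H_n'')$, and differentiate it row by row. Differentiating the first row, and likewise the second row, produces a determinant with two equal rows, hence contributes $0$; differentiating the third row replaces it by $(H_1''',H_2''',H_n''')=(0,0,H_n''')$, since $H_1$ and $H_2$ have degree at most $2$. Expanding the surviving determinant along this bottom row yields
\begin{gather*}
\frac{\rm d}{{\rm d}x}H_{2,n}^{(\{1,1\})}(x)=H_n'''(x)\det\begin{pmatrix}H_1&H_2\\H_1'&H_2'\end{pmatrix}=H_n'''(x)\operatorname{Wr}[H_1,H_2]=H_{\{1,1\}}(x)H_n'''(x),
\end{gather*}
which is the desired identity. (The same relation can instead be extracted by substituting the explicit form $H_{\{1,1\}}(x)=8x^2+4$ into \eqref{hermq} and solving for $Q_{n,\{1,1\}}$, dividing out a factor of $x$; but the Wronskian argument is shorter and does not need the divisibility claim of \cite{GGM14}.)

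Integrating the relation from $0$ to $x$ and using $H_n'''=8n(n-1)(n-2)H_{n-3}$ gives
\begin{gather*}
H_{2,n}^{(\{1,1\})}(x)=H_{2,n}^{(\{1,1\})}(0)+\int_0^xH_{\{1,1\}}(t)H_n'''(t)\,{\rm d}t
\\
=H_{2,n}^{(\{1,1\})}(0)+8n(n-1)(n-2)\int_0^xH_{\{1,1\}}(t)H_{n-3}(t)\,{\rm d}t,
\end{gather*}
so it remains only to identify the constant $H_{2,n}^{(\{1,1\})}(0)$. Setting $x=0$ in the defining determinant and using $H_1(0)=H_1''(0)=0$, $H_1'(0)=2$, $H_2(0)=-2$, $H_2'(0)=0$, $H_2''(0)=8$ from \eqref{hermlag}, an expansion along the first column leaves $H_{2,n}^{(\{1,1\})}(0)=4H_n''(0)+16H_n(0)$. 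Then \eqref{hermder} gives $H_n''(0)=4n(n-1)H_{n-2}(0)$, while \eqref{hermrecur} at $x=0$ gives $H_n(0)=-H_{n-1}'(0)=-2(n-1)H_{n-2}(0)$; substituting, $H_{2,n}^{(\{1,1\})}(0)=16(n-1)(n-2)H_{n-2}(0)$, which is the constant in the first formula, and since $H_{n-1}'(0)=2(n-1)H_{n-2}(0)$ this equals $8(n-2)H_{n-1}'(0)$, the constant in the second.

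There is no essential obstacle here: the one idea that makes the argument short is noticing that differentiating the Wronskian annihilates all but one term and that the surviving $2\times2$ minor is precisely $H_{\{1,1\}}$; everything else is bookkeeping with \eqref{hermder} and \eqref{hermrecur}. If one wanted to avoid Wronskian manipulations, the fallback is to expand $H_{2,n}^{(\{1,1\})}$ explicitly in the basis $\{H_n,H_{n-2},H_{n-4}\}$ using the three-term recurrence (a consequence of \eqref{hermder} and \eqref{hermrecur}), differentiate, and re-collapse the result — routine but longer. The only minor care needed is for $n=3$ and $n=4$, where the factors $n-3$ and $n-4$ cause the potential lowest-order contributions to vanish automatically.
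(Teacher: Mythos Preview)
Your proof is correct. Both you and the paper reduce the statement to the differential identity $\big(H_{2,n}^{(\{1,1\})}\big)'=H_{\{1,1\}}H_n'''$ together with the evaluation $H_{2,n}^{(\{1,1\})}(0)=16(n-1)(n-2)H_{n-2}(0)$, and both then integrate. The difference is in how the derivative identity is obtained: the paper first expands the $3\times 3$ Wronskian into the closed form \eqref{xhermform}, differentiates that expression, and then reorganizes the result using \eqref{hermder} and \eqref{hermrecur}; you instead differentiate the Wronskian determinant row by row and observe that the only surviving contribution comes from the third row $(H_1''',H_2''',H_n''')=(0,0,H_n''')$, whose cofactor is precisely $\operatorname{Wr}[H_1,H_2]=H_{\{1,1\}}$. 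Your route is shorter and explains structurally why the factor $H_{\{1,1\}}$ must appear, whereas the paper's computation, by passing through \eqref{xhermform}, also furnishes that explicit representation (which is used later in the proof of Theorem~\ref{hermdetform} to read off the leading coefficient). Your evaluation of the constant term via the determinant at $x=0$ is likewise a clean alternative to reading it off \eqref{xhermform}.
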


\begin{proof}
It suffices to show that
\begin{gather*}
\frac{\rm d}{{\rm d}x}H_{2,n}^{(\{1,1\})}(x)=8n(n-1)(n-2)H_{\{1,1\}}(x)H_{n-3}(x),
\end{gather*}
 and to calculate $H_{2,n}^{(\{1,1\})}(0)$. This follows easily from the formulas in \cite[Section 6.4]{SimExcep}, namely $H_{\{1,1\}}(x)=4\big(2x^2+1\big)$ and
\begin{gather*}
H_{2,n}^{(\{1,1\})}(x)=\det\begin{pmatrix}
2x & 4x^2-2 & H_n(x)\\
2 & 8x & H_n'(x)\\
0 & 8 & H_n''(x)
\end{pmatrix}
\end{gather*}
for $n\in\mathbb{N}_{\{1,1\}}$. As mentioned there, the formulas \eqref{hermder} and \eqref{hermrecur} imply
\begin{equation}\label{xhermform}
H_{2,n}^{(\{1,1\})}(x)=16(n-1)\big({-}2xH_{n-1}(x)+\big(n\big(2x^2+1\big)-2\big)H_{n-2}(x)\big)
\end{equation}
and from here, the first equality in the theorem is an elementary calculation using \eqref{hermder} and~\eqref{hermrecur}. The second formula follows from \eqref{hermder}.
\end{proof}

Numerical experiments have confirmed that for partitions other than $\{1,1\}$, it is not necessarily true that $H_{\lambda}$ divides $H_{|\lambda|,n}^{(\lambda)}(x)'$. One can use the result of \cite[Proposition 5.4]{GGM14} mentioned above to find a first order differential relation for $H_{|\lambda|,n}^{(\lambda)}(x)$, but the coefficients will be left in terms of $H_{\lambda}$, which is difficult to calculate explicitly and hence does not lead to a nice integral formula as in Theorem \ref{hermint}. Even still, it would be interesting to characterize the polynomials~$Q_{n,\lambda}$ from \eqref{hermq}.

Following the method of the previous sections, we can use Theorem \ref{hermint} to prove a determinantal formula for $H_{2,n}^{(\{1,1\})}$.

\begin{Theorem}\label{hermdetform}
Let $\{y_{j,N}\}_{j=1}^N$ denote the zeros of the classical Hermite polynomial $H_N$ and let
\begin{gather*}
\{U_{j,n}\}_{j=1}^{n-1}=\bigg\{\frac{\rm i}{\sqrt{2}},\frac{-{\rm i}}{\sqrt{2}}\bigg\} \cup\{y_{j-2,n-3}\}_{j=3}^{n-1}.
\end{gather*}
Let $M_n^{\{1,1\}}$ be the $n\times n$ matrix whose $j^{th}$ row is
\begin{gather*}
\big[1,(U_{j,n}),(U_{j,n})^2,\dots,(U_{j,n})^{n-1}\big]
\end{gather*}
for $j=1,\dots,n-1$ and let the last row of $M_n^{\{1,1\}}$ be
\begin{gather*}
\bigg[1,\frac{x}{2},\dots,\frac{x^{n-1}}{n}\bigg].
\end{gather*}
Then for $n\geq3$
\begin{gather*}
H_{2,n}^{(\{1,1\})}(x)=x\,\frac{2^{n+3}n(n-1)(n-2)}{\prod_{1\leq i<j\leq n-1}(U_{j,n}-U_{i,n})}\det\big(M_n^{\{1,1\}}\big)+16(n-1)(n-2)H_{n-2}(0).
\end{gather*}
\end{Theorem}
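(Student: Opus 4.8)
The plan is to follow the template already established in the proofs of Theorems~\ref{lag1detform}, \ref{lag2detform}, and~\ref{lag3detform}, since the structure of the claimed formula (a Vandermonde factor, a matrix whose first $n-1$ rows are powers of prescribed points, a last row of explicit polynomials, and an additive constant $16(n-1)(n-2)H_{n-2}(0)$) exactly mirrors the type-III Laguerre case in Theorem~\ref{lag3detform}. Write $H_{2,n}^{(\{1,1\})}(x)=\sum_{j=0}^n g_{j,n}x^j$. From Theorem~\ref{hermint}, $H_{2,n}^{(\{1,1\})}(x)'=8n(n-1)(n-2)H_{\{1,1\}}(x)H_{n-3}(x)$, and since $H_{\{1,1\}}(x)=4(2x^2+1)=8(x^2+\tfrac12)=8(x-\tfrac{\rm i}{\sqrt2})(x+\tfrac{\rm i}{\sqrt2})$, the zero set of $H_{2,n}^{(\{1,1\})}(x)'$ is exactly $\{U_{j,n}\}_{j=1}^{n-1}$. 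This is the analogue of the role played by~\eqref{lag3llms} in the type-III Laguerre proof, and the key point is that we only control the derivative, so $H_{2,n}^{(\{1,1\})}(0)=16(n-1)(n-2)H_{n-2}(0)$ (read off from~\eqref{xhermform} or Theorem~\ref{hermint}) must be handled separately as the additive constant.

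Next I would set up the linear-algebra argument verbatim as in Theorem~\ref{lag3detform}. Let $P_n(x)=\big(H_{2,n}^{(\{1,1\})}(x)-g_{0,n}\big)/x$, a polynomial of degree $n-1$. Define the $n\times n$ matrix $\tilde M_n^{\{1,1\}}$ whose $j$th row for $j=1,\dots,n-1$ is $[1,2U_{j,n},3(U_{j,n})^2,\dots,n(U_{j,n})^{n-1}]$ and whose last row is $[1,x,x^2,\dots,x^{n-1}]$; then with $\vec h=(g_{1,n},\dots,g_{n,n})^{T}$ and $\vec b=(0,\dots,0,P_n(x))^{T}$ one has $\tilde M_n^{\{1,1\}}\vec h=\vec b$, because the first $n-1$ rows encode the condition that each $U_{j,n}$ is a zero of $\sum_{k=1}^n k\,g_{k,n}x^{k-1}=H_{2,n}^{(\{1,1\})}(x)'$. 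Since $\det\tilde M_n^{\{1,1\}}$ is a polynomial in $x$ of degree at most $n-1$ that vanishes at every zero of $P_n$, we get $\det\tilde M_n^{\{1,1\}}=C'P_n(x)$; and $M_n^{\{1,1\}}$ is obtained from $\tilde M_n^{\{1,1\}}$ by elementary column operations (dividing column $k+1$ of the first $n-1$ rows by $k+1$ and correspondingly scaling the last row), so $\det M_n^{\{1,1\}}=C\,P_n(x)$ for some constant $C$. The distinctness of the points $\{U_{j,n}\}_{j=1}^{n-1}$ — needed for the Vandermonde factor to be nonzero — follows because the $y_{j,n-3}$ are simple real zeros of $H_{n-3}$ by the classical theorem on Hermite zeros quoted in Section~\ref{cherm}, while $\pm{\rm i}/\sqrt2$ are non-real and hence distinct from them and from each other.

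To pin down $C$, I would compute the coefficient of $x^{n-1}$ in $\det M_n^{\{1,1\}}$: expanding along the last row, this coefficient is $\frac1n$ times the $(n-1)\times(n-1)$ Vandermonde determinant in $U_{1,n},\dots,U_{n-1,n}$, i.e.\ $\frac1n\prod_{1\le i<j\le n-1}(U_{j,n}-U_{i,n})$. On the other side, $P_n(x)$ has degree $n-1$ with leading coefficient $g_{n,n}$, which is the leading coefficient of $H_{2,n}^{(\{1,1\})}$; from~\eqref{xhermform}, $H_{2,n}^{(\{1,1\})}(x)=16(n-1)\big({-}2xH_{n-1}(x)+(n(2x^2+1)-2)H_{n-2}(x)\big)$, and using that $H_N$ has leading coefficient $2^N$, the $x^n$ coefficient works out (the $x\cdot H_{n-1}$ and $x^2\cdot H_{n-2}$ terms contribute $16(n-1)(-2)2^{n-1}$ and $16(n-1)(2n)2^{n-2}$ respectively, whose sum gives the stated $2^{n+3}n(n-1)(n-2)\big/\big(n\cdot 16(n-1)\big)$-type constant after dividing by the Vandermonde normalization). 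Matching $C\cdot\frac1n\prod(U_{j,n}-U_{i,n})=g_{n,n}$ yields $C=\dfrac{n\,g_{n,n}}{\prod_{1\le i<j\le n-1}(U_{j,n}-U_{i,n})}$, and then $H_{2,n}^{(\{1,1\})}(x)=x\,P_n(x)+g_{0,n}=\dfrac{x}{C}\cdot\dfrac{1}{?}$ — more precisely $xP_n(x)=\dfrac{x}{C'}\det M_n^{\{1,1\}}$ with $C'$ determined to be $\dfrac{\prod(U_{j,n}-U_{i,n})}{2^{n+3}n(n-1)(n-2)}$, giving the claimed identity after adding back $g_{0,n}=16(n-1)(n-2)H_{n-2}(0)$. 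The main obstacle — really the only non-routine part — is the bookkeeping of this leading-coefficient computation: one must carefully track the powers of $2$, the factors $(n-1)$, $(n-2)$, and the $1/n$ from the last-row expansion, and verify that the two Hermite contributions in~\eqref{xhermform} combine to produce exactly $2^{n+3}n(n-1)(n-2)$; everything else is an immediate transcription of the earlier proofs.
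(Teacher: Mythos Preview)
Your proposal is correct and follows essentially the same route as the paper's proof: both set up the auxiliary matrix $\tilde M_n^{\{1,1\}}$, use Theorem~\ref{hermint} to identify $\{U_{j,n}\}$ as the zero set of $H_{2,n}^{(\{1,1\})}(x)'$, reduce to $\det M_n^{\{1,1\}}=C\,P_n(x)$ by the kernel argument from Theorem~\ref{lag1detform}, and fix $C$ by comparing leading coefficients via~\eqref{xhermform}. The only differences are cosmetic: you spell out the distinctness of the $U_{j,n}$ and the leading-coefficient arithmetic (which indeed gives $g_{n,n}=2^{n+3}(n-1)(n-2)$) more explicitly than the paper does.
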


\begin{Remark} Notice the similarity to Theorem \ref{lag3detform} in that the constant term of the polynomial needs to be added in separately because the determinant alone only gives a polynomial of degree~$n-1$.
\end{Remark}

\begin{proof}
Write
\begin{gather*}
H_{2,n}^{(\{1,1\})}(x)=\sum_{j=0}^nt_{j,n}x^j.
\end{gather*}
Let $\tilde{M}_n^{\{1,1\}}$ be the $n\times n$ matrix whose $j^{th}$ row is
\begin{gather*}
\big[1,2U_{j,n},3(U_{j,n})^2,\dots,n(U_{j,n})^{n-1}\big]
\end{gather*}
for $j=1,\dots,n-1$ and let the last row of $\tilde{M}_n^{\{1,1\}}$ be $\big[1,x,x^2,\dots,x^{n-1}\big]$. Theorem~\ref{hermint} implies $\{U_{j,n}\}_{j=1}^{n-1}$ is the zero set of $H_{2,n}^{(\{1,1\})}(x)'$.
Therefore, if $Q_n(x)=\big(H_{2,n}^{(\{1,1\})}(x)-t_{0,n}\big)/x$ and
\begin{gather*}
\vec{t}=\begin{pmatrix}t_{1,n}\\ t_{2,n}\\ \vdots\\ t_{n,n}\end{pmatrix}\!,\qquad
\vec{b}_{\{1,1\}}=\begin{pmatrix}0\\ \vdots\\ 0\\ Q_n(x)\end{pmatrix}\!,
\end{gather*}
then $\tilde{M}_n^{\{1,1\}}\vec{t}=\vec{b}_{\{1,1\}}$. Reasoning as in the proof of Theorem \ref{lag1detform}, we deduce that $\det\big(M_n^{\{1,1\}}\big)=CQ_n(x)$ for some $C\in\mathbb{C}$.

To determine this constant $C$, notice that the coefficient of $x^{n-1}$ in $\det\big(M_n^{\{1,1\}}\big)$ is
\begin{gather*}
\frac{1}{n}\,\det\begin{pmatrix}
1 & U_{1,n} & (U_{1,n})^2 & \cdots & (U_{1,n})^{n-2}\\
1 & U_{2,n} & (U_{2,n})^2 & \cdots & (U_{2,n})^{n-2}\\
\vdots & \vdots &\vdots & \ddots & \vdots\\
1 & U_{n-1,n} & (U_{n-1,n})^2 & \cdots & (U_{n-1,n})^{n-2}
\end{pmatrix}=\frac{1}{n}\prod_{1\leq i<j\leq n-1}(U_{j,n}-U_{i,n}).
\end{gather*}
The desired formula now follows from \eqref{hermlag} and~\eqref{xhermform}.
\end{proof}

\subsection*{Acknowledgements}

It is a pleasure to thank Rob Milson for helpful discussion about the content of this work. The author gratefully acknowledges support from the Simons Foundation through collaboration grant 707882. The author also thanks the anonymous referees for their useful feedback.

\pdfbookmark[1]{References}{ref}
\LastPageEnding


\begin{thebibliography}{99}
\footnotesize\itemsep=0pt

\bibitem{Bonn}
Bonneux N., Exceptional {J}acobi polynomials, \href{https://doi.org/10.1016/j.jat.2018.11.002}{\textit{J.~Approx. Theory}}
 \textbf{239} (2019), 72--112, \href{https://arxiv.org/abs/1804.01323}{arXiv:1804.01323}.

\bibitem{BK18}
Bonneux N., Kuijlaars A.B.J., Exceptional {L}aguerre polynomials, \href{https://doi.org/10.1111/sapm.12204}{\textit{Stud.
 Appl. Math.}} \textbf{141} (2018), 547--595, \href{https://arxiv.org/abs/1708.03106}{arXiv:1708.03106}.

\bibitem{D14}
Dur\'an A.J., Exceptional {C}harlier and {H}ermite orthogonal polynomials,
 \href{https://doi.org/10.1016/j.jat.2014.03.004}{\textit{J.~Approx. Theory}} \textbf{182} (2014), 29--58, \href{https://arxiv.org/abs/1309.1175}{arXiv:1309.1175}.

\bibitem{D14a}
Dur\'an A.J., Exceptional {M}eixner and {L}aguerre orthogonal polynomials,
 \href{https://doi.org/10.1016/j.jat.2014.05.009}{\textit{J.~Approx. Theory}} \textbf{184} (2014), 176--208, \href{https://arxiv.org/abs/1310.4658}{arXiv:1310.4658}.

\bibitem{D15}
Dur\'an A.J., Higher order recurrence relation for exceptional {C}harlier,
 {M}eixner, {H}ermite and {L}aguerre orthogonal polynomials, \href{https://doi.org/10.1080/10652469.2015.1009455}{\textit{Integral
 Transforms Spec. Funct.}} \textbf{26} (2015), 357--376, \href{https://arxiv.org/abs/1409.4697}{arXiv:1409.4697}.

\bibitem{D17}
Dur\'an A.J., Exceptional {H}ahn and {J}acobi orthogonal polynomials,
 \href{https://doi.org/10.1016/j.jat.2016.11.003}{\textit{J.~Approx. Theory}} \textbf{214} (2017), 9--48, \href{https://arxiv.org/abs/1510.02579}{arXiv:1510.02579}.

\bibitem{D20}
Dur\'an A.J., Corrigendum to the papers on {E}xceptional orthogonal
 polynomials: \textit{J.~Approx. Theory} \textbf{182} (2014) 29--58,
 \textbf{184} (2014), 176--208 and \textbf{214} (2017), 9--48,
 \href{https://doi.org/10.1016/j.jat.2019.105349}{\textit{J.~Approx. Theory}} \textbf{253} (2020), 105349, 5~pages.

\bibitem{D21}
Dur\'an A.J., The algebra of recurrence relations for exceptional {L}aguerre
 and {J}acobi polynomials, \href{https://doi.org/10.1090/proc/15267}{\textit{Proc. Amer. Math. Soc.}} \textbf{149}
 (2021), 173--188.

\bibitem{D21a}
Dur\'an A.J., Christoffel transform of classical discrete measures and
 invariance of determinants of classical and classical discrete polynomials,
 \href{https://doi.org/10.1016/j.jmaa.2021.125306}{\textit{J.~Math. Anal. Appl.}} \textbf{503} (2021), 125306, 29~pages.

\bibitem{DP}
Dur\'an A.J., P\'erez M., Admissibility condition for exceptional {L}aguerre
 polynomials, \href{https://doi.org/10.1016/j.jmaa.2014.11.035}{\textit{J.~Math. Anal. Appl.}} \textbf{424} (2015), 1042--1053,
 \href{https://arxiv.org/abs/1409.4901}{arXiv:1409.4901}.

\bibitem{FHV}
Felder G., Hemery A.D., Veselov A.P., Zeros of {W}ronskians of {H}ermite
 polynomials and {Y}oung diagrams, \href{https://doi.org/10.1016/j.physd.2012.08.008}{\textit{Phys.~D}} \textbf{241} (2012),
 2131--2137, \href{https://arxiv.org/abs/1005.2695}{arXiv:1005.2695}.

\bibitem{GGM}
Garc\'{\i}a-Ferrero M.A., G\'omez-Ullate D., Milson R., A {B}ochner type
 characterization theorem for exceptional orthogonal polynomials,
 \href{https://doi.org/10.1016/j.jmaa.2018.11.042}{\textit{J.~Math. Anal. Appl.}} \textbf{472} (2019), 584--626,
 \href{https://arxiv.org/abs/1603.04358}{arXiv:1603.04358}.

\bibitem{GGM21}
Garc\'{\i}a-Ferrero M.A., G\'omez-Ullate D., Milson R., Exceptional {L}egendre
 polynomials and confluent {D}arboux transformations, \href{https://doi.org/10.3842/SIGMA.2021.016}{\textit{SIGMA}}
 \textbf{17} (2021), 016, 19~pages, \href{https://arxiv.org/abs/2008.02822}{arXiv:2008.02822}.

\bibitem{GGM14}
G\'omez-Ullate D., Grandati Y., Milson R., Rational extensions of the quantum
 harmonic oscillator and exceptional {H}ermite polynomials,
 \href{https://doi.org/10.1088/1751-8113/47/1/015203}{\textit{J.~Phys.~A: Math. Theor.}} \textbf{47} (2014), 015203, 27~pages,
 \href{https://arxiv.org/abs/1306.5143}{arXiv:1306.5143}.

\bibitem{GGM14a}
G\'omez-Ullate D., Grandati Y., Milson R., Corrigendum on the proof of
 completeness for exceptional {H}ermite polynomials, \href{https://doi.org/10.1016/j.jat.2019.105350}{\textit{J.~Approx.
 Theory}} \textbf{253} (2020), 105350, 10, \href{https://arxiv.org/abs/1911.10602}{arXiv:1911.10602}.

\bibitem{GKM09}
G\'omez-Ullate D., Kamran N., Milson R., An extended class of orthogonal
 polynomials defined by a {S}turm--{L}iouville problem, \href{https://doi.org/10.1016/j.jmaa.2009.05.052}{\textit{J.~Math. Anal.
 Appl.}} \textbf{359} (2009), 352--367, \href{https://arxiv.org/abs/0807.3939}{arXiv:0807.3939}.

\bibitem{GKKM16}
G\'omez-Ullate D., Kasman A., Kuijlaars A.B.J., Milson R., Recurrence relations
 for exceptional {H}ermite polynomials, \href{https://doi.org/10.1016/j.jat.2015.12.003}{\textit{J.~Approx. Theory}}
 \textbf{204} (2016), 1--16, \href{https://arxiv.org/abs/1506.03651}{arXiv:1506.03651}.

\bibitem{GMM}
G\'omez-Ullate D., Marcell\'an F., Milson R., Asymptotic and interlacing
 properties of zeros of exceptional {J}acobi and {L}aguerre polynomials,
 \href{https://doi.org/10.1016/j.jmaa.2012.10.032}{\textit{J.~Math. Anal. Appl.}} \textbf{399} (2013), 480--495,
 \href{https://arxiv.org/abs/1204.2282}{arXiv:1204.2282}.

\bibitem{HOS}
Ho C.L., Odake S., Sasaki R., Properties of the exceptional {$(X_\ell)$}
 {L}aguerre and {J}acobi polynomials, \href{https://doi.org/10.3842/SIGMA.2011.107}{\textit{SIGMA}} \textbf{7} (2011), 107,
 24~pages, \href{https://arxiv.org/abs/0912.5447}{arXiv:0912.5447}.

\bibitem{KLOa}
Kelly J.S., Liaw C., Osborn J., Moment representations of exceptional {$X_1$}
 orthogonal polynomials, \href{https://doi.org/10.1016/j.jmaa.2017.05.037}{\textit{J.~Math. Anal. Appl.}} \textbf{455} (2017),
 1848--1869, \href{https://arxiv.org/abs/1610.06531}{arXiv:1610.06531}.

\bibitem{KLO}
Kelly J.S., Liaw C., Osborn J., Moment representations of {T}ype~{I}~{$X_2$}
 exceptional {L}aguerre polynomials, \textit{Adv. Dyn. Syst. Appl.}
 \textbf{14} (2019), 49--65, \href{https://arxiv.org/abs/1705.07851}{arXiv:1705.07851}.

\bibitem{KM01}
Kuijlaars A.B.J., McLaughlin K.T.R., Riemann--{H}ilbert analysis for {L}aguerre
 polynomials with large negative parameter, \href{https://doi.org/10.1007/BF03320986}{\textit{Comput. Methods Funct.
 Theory}} \textbf{1} (2001), 205--233, \href{https://arxiv.org/abs/math.CA/0204248}{arXiv:math.CA/0204248}.

\bibitem{KM}
Kuijlaars A.B.J., Milson R., Zeros of exceptional {H}ermite polynomials,
 \href{https://doi.org/10.1016/j.jat.2015.07.002}{\textit{J.~Approx. Theory}} \textbf{200} (2015), 28--39, \href{https://arxiv.org/abs/1412.6364}{arXiv:1412.6364}.

\bibitem{LLS}
Liaw C., Littlejohn L., Kelly J.S., Spectral analysis for the exceptional
 {$X_m$}-{J}acobi equation, \textit{Electron.~J. Differential Equations}
 \textbf{2015} (2015), 194, 10~pages, \href{https://arxiv.org/abs/1501.04698}{arXiv:1501.04698}.

\bibitem{LLMS}
Liaw C., Littlejohn L.L., Milson R., Stewart J., The spectral analysis of three
 families of exceptional {L}aguerre polynomials, \href{https://doi.org/10.1016/j.jat.2015.11.001}{\textit{J.~Approx. Theory}}
 \textbf{202} (2016), 5--41, \href{https://arxiv.org/abs/1407.4145}{arXiv:1407.4145}.

\bibitem{LO}
Liaw C., Osborn J., Moment representations of the exceptional
 {$X_1$}-{L}aguerre orthogonal polynomials, \href{https://doi.org/10.1002/mana.201500250}{\textit{Math. Nachr.}} \textbf{290}
 (2017), 1716--1731, \href{https://arxiv.org/abs/1506.07580}{arXiv:1506.07580}.

\bibitem{dlmf}
Olver F.W.J., Olde Daalhuis A.B., Lozier D.W., Schneider B.I., Boisvert R.F., Clark C.W., Miller B.R., Saunders B.V., Cohl H.S., McClain M.A. (Editors), {NIST} Digital Library of Mathematical Functions, {R}elease~1.1.5 of
 2022-03-15, \url{https://dlmf.nist.gov/}.

\bibitem{Q09}
Quesne C., Solvable rational potentials and exceptional orthogonal polynomials
 in supersymmetric quantum mechanics, \href{https://doi.org/10.3842/SIGMA.2009.084}{\textit{SIGMA}} \textbf{5} (2009), 084,
 24~pages, \href{https://arxiv.org/abs/0906.2331}{arXiv:0906.2331}.

\bibitem{SimExcep}
Simanek B., Convergence rates of exceptional zeros of exceptional orthogonal
 polynomials, \textit{Comput. Methods Funct. Theory}, {t}o appear,
 \href{https://arxiv.org/abs/2009.09432}{arXiv:2009.09432}.

\bibitem{SzegoBook}
Szeg\H{o} G., Orthogonal polynomials, 4th~ed., \textit{American Mathematical Society
 Colloquium Publications}, Vol.~23, Amer. Math. Soc., Providence,
 R.I., 1975.

\end{thebibliography}
\end{document}